\documentclass[10pt]{amsart}
\usepackage{amssymb,amsmath,txfonts,amsthm}
\usepackage{hyperref}
\usepackage{mathrsfs}
\newtheorem{theorem}{Theorem}

\newtheorem{lemma}{Lemma}
\newtheorem*{remark}{Remark}
\newtheorem{claim}{Claim}
\newtheorem*{definition}{Definition}
\newtheorem{cor}{Corollary}
\newtheorem*{conj}{Conjecture}

\def\Xint#1{\mathchoice
  {\XXint\displaystyle\textstyle{#1}}%
  {\XXint\textstyle\scriptstyle{#1}}%
  {\XXint\scriptstyle\scriptscriptstyle{#1}}%
  {\XXint\scriptscriptstyle\scriptscriptstyle{#1}}%
  \!\int}
\def\XXint#1#2#3{{\setbox0=\hbox{$#1{#2#3}{\int}$}
  \vcenter{\hbox{$#2#3$}}\kern-.5\wd0}}

\def\dashint{\Xint-}

\author{Gang Liu}
\address{Department of Mathematics\\University of California, Berkeley\\Berkeley, CA 94720}
\email{gangliu@math.berkeley.edu}
\title[Maximal Volume Growth]{On the volume growth of K\"ahler manifolds with nonnegative bisectional curvature}
\date{}
\begin{document}
\begin{abstract}
Let $M$ be a complete K\"ahler manifold with nonnegative bisectional curvature. Suppose the universal cover does not split and $M$ admits a nonconstant holomorphic function with polynomial growth, we prove $M$ must be of maximal volume growth.
This confirms a conjecture of Ni in \cite{[N1]}. There are two essential ingredients in the proof: the Cheeger-Colding theory \cite{[CC1]}-\cite{[CC4]} on Gromov-Hausdorff convergence of manifolds; the three circle theorem for holomorphic functions in \cite{[L]}.
\end{abstract}
\maketitle

\section{\bf{Introduction}}
In \cite{[Y1]}, Yau proposed the study of the uniformization of complete K\"ahler manifolds with nonnegative bisectional curvature. In particular, one wishes to determine whether or not a complete K\"ahler manifold with positive bisectional curvature is biholomorphic to $\mathbb{C}^n$. Motivated by this, Yau further asked whether or not the ring of holomorphic functions with polynomial growth is finitely generated, and whether or not the dimension of the
spaces of holomorphic functions of polynomial growth is bounded from above by the
dimension of the corresponding spaces of polynomials on $\mathbb{C}^n$.

In \cite{[N1]}, Ni confirmed Yau's conjecture on the sharp dimension estimate of holomorphic functions with polynomial growth when the manifold has maximal volume growth. Here maximal volume growth means $$\frac{Vol(B(p, r))}{r^{2n}} >c>0$$ for all $r>0$, $p\in M$.
\begin{definition}\label{def1}
Let $M$ be a complete noncompact K\"ahler manifold. Let $\mathcal{O}(M)$ be the ring of holomorphic functions on $M$. For any $d\geq 0$, define $$\mathcal{O}_d(M) = \{f\in \mathcal{O}(M)|\overline{\lim\limits_{r\to\infty}}\frac{M_f(r)}{r^d}<\infty\}.$$ Here $r$ is the distance from a fixed point $p$ on $M$; $M_f(r)$ is the maximal modulus of $f$ on $B(p, r)$. If $f\in \mathcal{O}_d(M)$, we say $f$ is of polynomial growth with order $d$. Let $$\mathcal{O}_P(M) = \cup_{d\in\mathbb{N}}\mathcal{O}_d(M).$$ If $M$ is only Riemannian, let $\mathcal{H}_d(M)$ be the linear space of harmonic functions on $M$ of polynomial growth with order $d$.
 \end{definition}\begin{theorem}\label{thm-9}[Ni]
Let $M^n$ be a complete K\"ahler manifold with nonnegative holomorphic bisectional curvature. Assume $M$ is of maximal volume growth, then $$dim(\mathcal{O}_d(M))\leq dim(\mathcal{O}_d(\mathbb{C}^n))$$ for any positive integer $d$. If the equality holds for some $d$, $M$ is isometric and biholomorphic to $\mathbb{C}^n$.
\end{theorem}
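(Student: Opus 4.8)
This is Ni's theorem from \cite{[N1]}; the plan I would follow is his heat--kernel monotonicity method. The idea is that for a holomorphic function $f$ the ``frequency'' of $f$ measured through the heat semigroup is monotone in $t$ \emph{exactly because} the bisectional curvature is nonnegative, and one then bounds $\dim(\mathcal O_d(M))$ by comparing these frequencies against the model $\mathbb C^n$.

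\textbf{Analytic setup.} Since $M$ has nonnegative Ricci curvature and maximal volume growth, the Li--Yau estimates give two-sided Gaussian bounds for the heat kernel $H(x,y,t)$; in particular $H(p,p,t)\asymp t^{-n}$ and $\mathrm{Vol}(B(p,r))\asymp r^{2n}$. Given $f\in\mathcal O_d(M)$, the function $|f|^2$ is plurisubharmonic, hence subharmonic, and the bound $M_f(r)\le Cr^d$ with volume comparison makes $u(x,t):=\int_M H(x,y,t)\,|f(y)|^2\,d\mu(y)$ finite, with $u(p,t)\le C' t^{d}$ for $t$ large. Using the Kähler identity $\Delta|f|^2=2|\partial f|^2$ for holomorphic $f$, one has $u_t=\int_M H\,\Delta|f|^2\,d\mu\ge 0$, so $u$ is nondecreasing; set $W(t):=t\,u_t(p,t)/u(p,t)$.

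\textbf{The monotonicity formula (main obstacle).} The heart of the proof is that on a complete Kähler manifold with nonnegative holomorphic bisectional curvature $W(t)$ is nonincreasing; this is where the curvature hypothesis is used, and it is used sharply. Differentiating and integrating by parts reduces the claim to an inequality relating $\int_M H\,\Delta|\partial f|^2\,d\mu$, $\int_M H\,|\partial f|^2\,d\mu$ and $\int_M H\,|\nabla\partial f|^2\,d\mu$; one then applies the Bochner--Weitzenböck formula to the holomorphic $(1,0)$-form $\partial f$, in which, on a Kähler manifold, the ``Ricci'' term in $\Delta|\partial f|^2$ is replaced by a contraction of the full curvature tensor against $\partial f\otimes\overline{\partial f}$ that is nonnegative precisely when the bisectional curvature is nonnegative. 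Combining this with the improved Kato inequality valid for holomorphic tensors yields the differential inequality forcing $W'(t)\le 0$. The model computation on $\mathbb C^n$ with $f$ a homogeneous polynomial of degree $k$ -- where $u(t)=c\,t^{k}$ identically and $W(t)\equiv k$ -- shows the inequality is sharp. Since $W$ is now nonincreasing and nonnegative, $W_\infty(f):=\lim_{t\to\infty}W(t)$ exists, and one checks $f\in\mathcal O_d(M)$ if and only if $W_\infty(f)\le d$.

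\textbf{Dimension count and rigidity.} Fix a finite--dimensional subspace $V\subseteq\mathcal O_d(M)$ and consider, for each $t>0$, the positive Hermitian form $\langle f,g\rangle_t:=\int_M H(p,y,t)\,f(y)\,\overline{g(y)}\,d\mu(y)$ on $V$. Monotonicity of the frequencies pins down the relative growth rates of the eigenvalues of $\langle\cdot,\cdot\rangle_t$ as $t\to\infty$: after renormalizing by the asymptotic volume ratio, the associated unit balls converge to the corresponding object for the model, the comparison being legitimized either directly from the monotonicity or by passing to the tangent cone at infinity, which exists by Cheeger--Colding. Since holomorphic functions on $\mathbb C^n$ with $W_\infty\le d$ form a space of dimension $\dim(\mathcal O_d(\mathbb C^n))$, a counting argument forces $\dim V\le\dim(\mathcal O_d(\mathbb C^n))$, hence $\dim(\mathcal O_d(M))\le\dim(\mathcal O_d(\mathbb C^n))$. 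If equality holds for some $d$, all the inequalities above are saturated: the relevant $W(t)$ are constant, so the Bochner error terms vanish identically, and the graded structure forces $\mathcal O_1(M)$ to have dimension $n+1$, producing $n$ holomorphic functions $f_1,\dots,f_n$ with $\nabla\partial f_j\equiv 0$ whose differentials are everywhere linearly independent; the vanishing of the curvature contraction against each $\partial f_j$ then forces the curvature tensor of $M$ to vanish. Thus $M$ is flat Kähler, so $M=\mathbb C^n/\Gamma$ with $\Gamma$ acting freely by holomorphic isometries; maximal volume growth forces $\Gamma$ finite, and $\dim(\mathcal O_d(\mathbb C^n/\Gamma))=\dim(\mathcal O_d(\mathbb C^n))$ means every polynomial of degree $\le d$ is $\Gamma$-invariant, which since $d\ge 1$ forces $\Gamma$ trivial. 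Hence $M$ is isometric and biholomorphic to $\mathbb C^n$.
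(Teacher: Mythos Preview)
Your outline follows Ni's original heat-kernel monotonicity method from \cite{[N1]}, which is indeed the source cited for this theorem. The paper, however, revisits the statement in Section~5 with a completely different and much shorter argument: rescale $(M,g,p)$ by factors $r_i\to 0$, so that $(M, r_i^{-1}g, p)$ converges in the pointed Gromov--Hausdorff sense to the tangent space $T_pM\cong\mathbb C^n$; then Lemma~\ref{lm1} (whose engine is the three-circle theorem, Theorem~\ref{thm0}) passes any $h_d=\dim\mathcal O_d(M)$ linearly independent functions to $h_d$ linearly independent elements of $\mathcal O_d(\mathbb C^n)$, and the inequality follows at once. This blow-up argument does not even require maximal volume growth, so it also recovers the extension of Chen--Fu--Le--Zhu \cite{[CFLZ]}; on the other hand the paper explicitly does not attempt the rigidity statement by this route, whereas Ni's monotonicity approach does yield rigidity.

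Two points in your sketch should be corrected. First, the direction of Ni's monotonicity is reversed: under nonnegative bisectional curvature the quantity $W(t)=t\,u_t(p,t)/u(p,t)$ is \emph{nondecreasing}, not nonincreasing. (Already on $\mathbb C^n$, with $f=\sum_k f_k$ a homogeneous decomposition, one has $u(0,t)=\sum_k c_kt^k$ and $W(t)$ increases from $\mathrm{ord}_p(f)$ at $t\to 0^+$ to $\deg f$ at $t\to\infty$.) Second, the dimension count is not carried out via Hermitian forms $\langle\cdot,\cdot\rangle_t$ or tangent cones at infinity; it is the vanishing-order argument. From the monotonicity one gets $\mathrm{ord}_p(f)=W(0^+)\le \limsup_{t\to\infty}W(t)\le d$ for every nonzero $f\in\mathcal O_d(M)$, so the $d$-jet map $\mathcal O_d(M)\to J^d_p(\mathcal O)$ into holomorphic $d$-jets at $p$ is injective, and that jet space has dimension exactly $\dim\mathcal O_d(\mathbb C^n)$. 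Your paragraph on the dimension count conflates this with the Gromov--Hausdorff machinery of the present paper and does not stand on its own.
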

Later Chen, Fu, Le, Zhu \cite{[CFLZ]} removed the maximal volume growth condition by using the same technique in \cite{[N1]}. See also \cite{[L]} for a different proof. Based on some results in \cite{[NT1]} and \cite{[N1]}, 
Ni raised the following conjecture (Conjecture $3.1$ in \cite{[N1]}):
\begin{conj}
Let $M^n$ be a complete noncompact K\"ahler manifold with nonnegative bisectional curvature. Assume $M$ admits a nonconstant holomorphic function with polynomial growth and the bisectional curvature is positive at least at one point, then $M$ is of maximal volume growth. Namely, $\mathcal{O}_P(M)\neq \mathbb{C}$, average quadratic curvature decay, and $M$ being of maximal volume growth are all equivalent if $M$ has quasi-positive bisectional curvature. Average quadratic curvature decay means for all $r>0$,
\begin{equation}\label{eq1}
\dashint_{B(p, r)} S \leq \frac{C}{r^2}
\end{equation} where $p$ is a point on $M$, $C$ is a positive constant independent of $r$, $S$ is the scalar curvature. $\dashint$ means the average.
\end{conj}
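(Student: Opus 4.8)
The substantive content of the conjecture is the implication ``$\mathcal{O}_P(M)\neq\mathbb{C}\ \Rightarrow\ M$ has maximal volume growth''; the converse direction and the role of average quadratic curvature decay are essentially clarified by \cite{[NT1]}, \cite{[N1]}, and I focus on this implication. Following the formulation in the abstract I will only use that the universal cover $\tilde M$ does not split isometrically, which follows from quasi-positivity (a nontrivial de~Rham factor would force, at the point of positive bisectional curvature, a pair of directions with vanishing bisectional curvature). So suppose, for contradiction, that $\tilde M$ does not split, that $M$ carries a nonconstant holomorphic function of polynomial growth, and yet that $M$ is not of maximal volume growth; by the Bishop--Gromov monotonicity this means $V(r):=Vol(B(p,r))/r^{2n}\searrow 0$ as $r\to\infty$. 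I would argue by induction on the complex dimension $n$, the case $n=1$ being handled directly via the conformal uniformization of a complete surface with $K\ge 0$ (a nonconstant polynomial-growth holomorphic function, together with the superharmonicity of the conformal factor on the parabolic surface, keeps the area ratio bounded below).

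The first step uses the three circle theorem of \cite{[L]} to pin down a well-behaved holomorphic function. Fix a nonconstant $f\in\mathcal{O}_d(M)$. By the three circle theorem $\log M_f(r)$ is convex and nondecreasing in $\log r$, so the order $\tau:=\lim_{r\to\infty}\log M_f(r)/\log r$ exists and lies in $(0,d]$, and $\lambda^{\tau}\le M_f(\lambda r)/M_f(r)\le\lambda^{d}$ for every $\lambda>1$ and every large $r$. In particular $\sup_{B(p,r/2)}|f|\ge 2^{-d}M_f(r)$; this is the uniform non-degeneracy of $f$ that must survive rescaling.

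Second, I blow down: set $g_i:=r_i^{-2}g$ for $r_i\to\infty$, so that a subsequence of $(M,g_i,p)$ converges in the pointed Gromov--Hausdorff sense to a length space $(M_\infty,d_\infty,o)$ with $\mathrm{Ric}\ge 0$ synthetically. Since $Vol_{g_i}(B_{g_i}(p,1))=V(r_i)\to 0$, the limit is \emph{collapsed}, so by Cheeger--Colding \cite{[CC1]} its Hausdorff dimension is at most $2n-1$, its regular set $\mathcal{R}$ is open and dense and carries a $C^{1,\alpha}$ Riemannian metric, and the singular set is suitably small; the parallel complex structures $J_{g_i}$ subconverge on $\mathcal{R}$ to a parallel $J_\infty$, making $\mathcal{R}$ Kähler of strictly smaller dimension with nonnegative bisectional curvature. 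Normalizing $f_i:=f/M_f(r_i)$, the three-circle bounds give $\sup_{B_{g_i}(p,1)}|f_i|=1$, $\sup_{B_{g_i}(p,R)}|f_i|\le CR^{d}$ for all $R$, and $\mathrm{osc}_{B_{g_i}(p,1/2)}f_i\ge c_0>0$; together with interior gradient estimates for $J$-pluriharmonic functions, applied uniformly on the nearly-regular region where Cheeger--Colding supplies harmonic charts of controlled geometry, $f_i$ subconverges locally uniformly on $\mathcal{R}$ to a \emph{nonconstant} $J_\infty$-holomorphic function $f_\infty$ of polynomial growth on $M_\infty$. The three circle theorem is precisely what prevents $f_\infty$ from degenerating to a constant.

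The heart of the matter — and the step I expect to be the main obstacle — is to extract an isometric splitting from the collapsed limit. The collapsing of $2n$-manifolds with $\mathrm{Ric}\ge 0$ endows $M_\infty$ with extra almost-translational structure along the collapsed directions; feeding in the nonconstant polynomial-growth harmonic functions $\mathrm{Re}\,f_\infty$, $\mathrm{Im}\,f_\infty$ and running the Cheeger--Colding splitting philosophy (a polynomial-growth harmonic function on a nonnegatively curved conical or collapsed limit has asymptotically parallel gradient), I would produce a line in $M_\infty$, hence a splitting $M_\infty=\mathbb{R}^{k}\times N$ which the Kähler structure upgrades to splitting off a $\mathbb{C}^{k/2}$-factor with $k\ge 2$. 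It then remains to descend this to $M$: a Euclidean factor in the asymptotic geometry of $M$ should, through the associated Busemann-type functions and the structure theory for Kähler manifolds with nonnegative bisectional curvature in \cite{[NT1]}, yield a nonconstant linear-growth holomorphic function on $M$ and thence an isometric splitting of $\tilde M$ — the desired contradiction. The induction closes because the non-split factor $N$ is again of the same type, of strictly smaller complex dimension, still carrying a nonconstant polynomial-growth holomorphic function (restriction of $f_\infty$, or one of its factors) and, by volume convergence, of sub-maximal volume growth. The genuinely delicate points, on which I would spend the most care, are: (i) showing the singular limit $M_\infty$ is ``conical enough'' for the harmonic-function-to-parallel-gradient mechanism, likely by first iterating tangent-cone-at-infinity constructions to reduce to an honest metric cone; (ii) tracking the orders $\tau$ through the dimension reduction so the holomorphic data does not collapse to a constant; and (iii) making the splitting theorem and its descent to $\tilde M$ rigorous in the Cheeger--Colding setting rather than merely for smooth manifolds.
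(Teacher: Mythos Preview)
Your proposal has a genuine gap at its core, and the mechanism you aim for is different from the one that actually works.

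First, the claim that the regular set $\mathcal{R}$ of the collapsed limit carries a $C^{1,\alpha}$ Riemannian metric and a parallel $J_\infty$ making it K\"ahler of smaller dimension is false in the collapsed setting. Cheeger--Colding regularity in the sense of $C^{1,\alpha}$ charts is available only in the \emph{noncollapsed} case; for collapsed limits a regular point merely has all tangent cones isometric to $\mathbb{R}^k$ for some $k\le 2n-1$, with no smooth structure and no reason for $k$ to be even. Indeed the paper explicitly notes that the limit $\mathbb{R}^k$ need not inherit any complex structure from $M_i$, so there is no ``$J_\infty$-holomorphic'' $f_\infty$ to speak of, and your induction on complex dimension cannot be set up. The subsequent steps---producing a line from a polynomial-growth (not linear-growth) harmonic function, upgrading a metric $\mathbb{R}$-splitting of a singular limit to a $\mathbb{C}$-splitting, and then \emph{descending} a splitting of a tangent cone at infinity back to $\tilde M$---are each well-known hard or false in this generality, and the student-level heuristics you give do not bridge them.

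The paper's route avoids all of this. After blowing down, it blows \emph{up} at a regular point of the limit to obtain a sequence $(M,q_i,d_i^{-2}g)\to(\mathbb{R}^k,0)$ with $k\le 2n-1$. Crucially, it does not work with a single $f$: using Ni--Tam's plurisubharmonic theory and H\"ormander's $L^2$ estimate it shows $\dim\mathcal{O}_d(M)\ge cd^n$, then passes an entire unitary basis (via the three circle theorem and Cheng--Yau) to $h_d\ge cd^n$ linearly independent \emph{harmonic} functions on $\mathbb{R}^k$. If $k\le n$ this already contradicts $\dim\mathcal{H}_d(\mathbb{R}^k)\le C d^{k-1}$. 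For $n+1\le k\le 2n-1$ the key new idea is a \emph{partial} complex structure: since $\nabla b_1,\dots,\nabla b_k,J\nabla b_1,\dots,J\nabla b_k$ cannot be almost orthonormal in a $2n$-dimensional space, a Gram--Schmidt argument forces, after a linear change, $J\nabla b_{2s-1}\approx\nabla b_{2s}$ for $1\le s\le k-n$, so $\mathbb{R}^k=\mathbb{C}^{k-n}\times\mathbb{R}^{2n-k}$ and the limit functions are holomorphic on the $\mathbb{C}^{k-n}$ factor. Then $\dim E(d,n,k)\le C d^{n-1}$ contradicts $h_d\ge cd^n$. No splitting of $M$ or $\tilde M$ is ever produced; the contradiction is purely a dimension count on the tangent space $\mathbb{R}^k$.
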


In complex one dimensional case, the conjecture is well-known, e.g., \cite{[LT]}. In general dimensions, 
under the assumption of the conjecture, Ni proved that $Vol(B(p, r))\geq cr^{n+1}$ in \cite{[N1]}. Under an extra pointwise curvature decay condition, Ni and Tam \cite{[NT1]} were able to confirm the conjecture.  Proofs of the partial results in \cite{[N1]} and \cite{[NT1]} are based on the Poincare-Lelong equation, heat flow methods including the sharp mononoticity formula discovered in \cite{[N1]}. Very recently, in a personal conversation with Ni, the author was informed that the conjecture is known to be true if one assumes an upper bound of curvature. The proof involves the K\"ahler-Ricci flow. In this note, we confirm the first part of Ni's conjecture:
\begin{theorem}\label{thm1}
Let $M^n$ be a complete noncompact K\"ahler manifold with nonnegative bisectional curvature. Suppose the universal cover $\tilde{M}$ does not split as a product of two K\"ahler manifolds. If $M$ admits a nonconstant holomorphic function with polynomial growth, then $M$ has maximal volume growth.
\end{theorem}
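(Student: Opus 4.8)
The plan is to argue by contradiction. Suppose $M$ is \emph{not} of maximal volume growth, so that the asymptotic volume ratio $\mathrm{AVR}(M)=\lim_{r\to\infty}\mathrm{Vol}(B(p,r))/r^{2n}$, which exists and is monotone by Bishop--Gromov, equals $0$. Fix a sequence $r_i\to\infty$ and pass (Gromov compactness) to a pointed measured Gromov--Hausdorff limit $(M,r_i^{-2}g,p,\underline{\mathrm{vol}})\to(M_\infty,d_\infty,p_\infty,\nu)$, a tangent cone at infinity. By the Cheeger--Colding theory \cite{[CC1]}--\cite{[CC4]} the limit $(M_\infty,\nu)$ is a rectifiable metric measure space inheriting the structural consequences of $\mathrm{Ric}\ge 0$: a regular/singular decomposition with $\nu$-full regular set, renormalized volume comparison, and the principle that its renormalized measure is $\mathcal H^k$-like on the top stratum, where $k=\dim_{\mathcal H}M_\infty\le 2n$ with $k=2n$ precisely in the noncollapsed case.

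The next step is to transplant the polynomial-growth holomorphic function to $M_\infty$ using the three circle theorem of \cite{[L]}. Let $f\in\mathcal O_d(M)$ be nonconstant; since $\mathrm{Ric}\ge 0$, $f$ is unbounded, so $M_f(r)\to\infty$. The three circle theorem says $\log M_f(r)$ is convex in $\log r$, and together with the polynomial bound this forces $M_f(\lambda r)/M_f(r)\to\lambda^{d}$ for every fixed $\lambda>0$, where $d=\lim_{r\to\infty}\log M_f(r)/\log r$. Put $u_i:=f/M_f(r_i)$, viewed as a holomorphic function on $(M,r_i^{-2}g)$; then on a rescaled ball $B_R$ one has $\sup_{B_R}|u_i|=M_f(Rr_i)/M_f(r_i)\to R^{d}$, while $u_i(p)\to 0$ and $\sup_{B_1}|u_i|=1$. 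The Cheng--Yau gradient estimate applied to the harmonic functions $\mathrm{Re}\,u_i,\mathrm{Im}\,u_i$ gives uniform Lipschitz bounds on each $B_R$, so along a subsequence $u_i\to f_\infty$ uniformly on compact subsets of $M_\infty$, with $f_\infty(p_\infty)=0$, $\sup_{B_1}|f_\infty|=1$ (hence $f_\infty$ nonconstant), and the sharp growth $M_{f_\infty}(R)=R^{d}$ for all $R>0$.

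The geometric heart of the argument is to show that the existence of such an $f_\infty$, together with \cite{[CC1]}--\cite{[CC4]}, forces $M_\infty$ to be a \emph{noncollapsed} $2n$-dimensional metric cone. Concretely: the K\"ahler structure of the approximating manifolds descends to the regular part of $M_\infty$ (this is where nonnegative bisectional curvature is used, to obtain the needed elliptic and complex-analytic estimates along the convergence), so $f_\infty$ is holomorphic there; then the equality $M_{f_\infty}(R)=R^{d}$ is a rigidity statement which, via the volume-cone/metric-cone circle of ideas, makes $M_\infty$ a metric cone $C(Y)$ with $f_\infty$ homogeneous of degree $d$ under the dilations. The no-splitting hypothesis on $\tilde M$ enters exactly here, to reduce to the irreducible case: without it $\tilde M=\mathbb C\times N$ (e.g.\ $N$ the cigar, which has nonnegative curvature and vanishing $\mathrm{AVR}$) already gives a nonconstant polynomial-growth holomorphic function with no maximal volume growth, so such product degenerations of the cone must be excluded. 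In the irreducible case one concludes that the regular part of $C(Y)$ is a genuine complex $n$-dimensional K\"ahler cone, so $M_\infty$ has Hausdorff dimension $2n$ and the convergence is noncollapsed; by Colding's volume convergence this yields $\mathrm{Vol}(B(p,r))\ge c\,r^{2n}$, i.e.\ $\mathrm{AVR}(M)>0$, contradicting the assumption. The main obstacle is precisely this step: showing that the complex/K\"ahler structure passes to the limit and that the limit cone cannot be collapsed — equivalently, that a nonconstant holomorphic function of polynomial growth obstructs collapsing of the tangent cone at infinity — since a priori the tangent cone at infinity of a manifold with $\mathrm{AVR}=0$ could be a lower-dimensional space carrying no complex structure at all.
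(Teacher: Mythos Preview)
Your proposal has the right overall shape (contradiction via blow-down, Cheeger--Colding theory, and the three circle theorem), but the decisive step is left as an acknowledged obstacle, and the actual mechanism of the paper is different in two essential ways.

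First, your use of the no-splitting hypothesis is not the right one. You invoke it only to rule out obvious product counterexamples to the statement. In the paper, the hypothesis enters constructively and early: combined with the Ni--Tam theory of plurisubharmonic functions and H\"ormander $L^2$-estimates, it upgrades the existence of \emph{one} nonconstant $f\in\mathcal{O}_P(M)$ to the dimension bound $\dim\mathcal{O}_d(M)\ge c\,d^n$ for all large $d$ (Lemma~\ref{lm2}). The eventual contradiction is a \emph{dimension count}, not a rigidity statement about a single function.

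Second, the paper does not work on the tangent cone at infinity $M_\infty$ directly, and never needs a complex structure on it. Instead it performs a second blow-up: take a \emph{regular} point $q\in M_\infty$ (dense by Cheeger--Colding), where the tangent cone is $\mathbb{R}^k$ with $k\le 2n-1$; a diagonal argument then produces $q_i\in M$ and scales $d_i$ with $(M,q_i,d_i^{-2}g)\to(\mathbb{R}^k,0)$. This completely sidesteps your ``main obstacle'': the final limit is honest Euclidean space. The $h_d\ge c\,d^n$ holomorphic functions pass (three circle theorem for normalization, Cheng--Yau for equicontinuity, Lemma~\ref{lm1}) to $h_d$ independent harmonic functions of order $\le d$ on $\mathbb{R}^k$. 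If $k\le n$ this already contradicts $\dim\mathcal{H}_d(\mathbb{R}^k)\le C\,d^{n-1}$. If $n+1\le k\le 2n-1$, a Gram--Schmidt argument on the Cheeger--Colding almost-splitting functions $b_j$ together with their $J$-rotates $J\nabla b_j$ extracts a ``partial'' complex structure $\mathbb{R}^k=\mathbb{C}^{k-n}\times\mathbb{R}^{2n-k}$ on which the limit functions are holomorphic in the first factor; the available dimension is then at most $C\,d^{k-n}\cdot d^{2n-k-1}=C\,d^{n-1}$, again a contradiction.

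Your proposed route---pass one $f$ to $f_\infty$ and use $M_{f_\infty}(R)=R^d$ as rigidity to force the cone to be $2n$-dimensional---is not carried out, and it is unclear that it can be: equality in the growth of a single harmonic function does not feed into the volume-cone/metric-cone machinery, and gives no lower bound on the Hausdorff dimension of a collapsed limit. The missing idea is precisely to replace ``one function plus rigidity'' by ``many functions plus dimension counting'', which is what Ni--Tam (to produce the functions) together with the partial-complex-structure argument on $\mathbb{R}^k$ (to bound them from above) accomplishes.
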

\begin{remark}
Obviously it is necessary to assume $\tilde{M}$ does not split. This theorem essentially reduces Yau's conjecture on the finite generation of holomorphic functions with polynomial growth to the case when the manifold has maximal volume growth.
\end{remark}

\begin{cor}\label{cor1}
Let $M^n$ be  a complete K\"ahler manifold with nonnegative bisectional curvature. Suppose there exists a positive constant $c>0$ so that  $dim(\mathcal{O}_d(M))\geq cd^n$ for some sufficiently large $d$, then $M$ is of maximal volume growth. \end{cor}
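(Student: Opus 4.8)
The plan is to deduce Corollary~\ref{cor1} from Theorem~\ref{thm1}, the de Rham decomposition, and the sharp dimension estimate of \cite{[CFLZ]}. To begin, if $\dim(\mathcal{O}_d(M))\geq cd^n$ for some large $d$ then $\mathcal{O}_d(M)$ contains a nonconstant function, so $\mathcal{O}_P(M)\neq\mathbb{C}$; hence if the universal cover $\tilde{M}$ does not split, Theorem~\ref{thm1} already gives maximal volume growth and there is nothing to prove. I therefore assume $\tilde{M}$ splits and argue by contradiction, supposing $M$ is not of maximal volume growth.

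Using the de Rham decomposition together with the fact that $J$ is parallel, I would write $\tilde{M}=\mathbb{C}^k\times M_1\times\cdots\times M_s$, where each $M_j$ is a simply connected, non-flat complete K\"ahler manifold with nonnegative bisectional curvature that does not split as a product of two K\"ahler manifolds. Two elementary facts about polynomial growth holomorphic functions on a product $A\times B$ are used: (i) restricting $h\in\mathcal{O}_d(A\times B)$ to the slices $A\times\{b\}$ and taking coefficients against a fixed basis of the finite-dimensional space $\mathcal{O}_d(A)$ exhibits $h$ as an element of $\mathcal{O}_d(A)\otimes\mathcal{O}_d(B)$, whence $\dim\mathcal{O}_d(A\times B)\leq\dim\mathcal{O}_d(A)\cdot\dim\mathcal{O}_d(B)$; and (ii) if $\mathcal{O}_P(B)=\mathbb{C}$, or $B$ is compact, every slice carries only constants, so $\dim\mathcal{O}_d(A\times B)=\dim\mathcal{O}_d(A)$. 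Now $\dim\mathcal{O}_d(M)\leq\dim\mathcal{O}_d(\tilde{M})$, and by \cite{[CFLZ]} a complete K\"ahler manifold of complex dimension $m$ with nonnegative bisectional curvature has $\dim\mathcal{O}_d=O(d^m)$; so by (ii), if some factor $M_j$ were compact or had $\mathcal{O}_P(M_j)=\mathbb{C}$, then $\dim\mathcal{O}_d(M)=O(d^{\,n-n_j})=o(d^n)$ with $n_j=\dim_{\mathbb{C}}M_j\geq1$, contradicting the hypothesis. Thus every $M_j$ is noncompact with $\mathcal{O}_P(M_j)\neq\mathbb{C}$, so Theorem~\ref{thm1} applies to each $M_j$ and shows that every $M_j$ — and hence $\tilde{M}$ — has maximal volume growth.

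The remaining step is to pass from $\tilde{M}$ to $M$. Let $\Gamma=\pi_1(M)$ act on $\tilde{M}$ by holomorphic isometries. If $\Gamma$ were finite then $M$ would be a finite quotient of $\tilde{M}$ and would inherit maximal volume growth; so $\Gamma$ is infinite. After passing to a finite cover of $M$ — harmless for both the hypothesis and the conclusion — I may assume $\Gamma$ preserves the factors of the decomposition. By the structure theory of fundamental groups of nonnegatively curved K\"ahler manifolds, $\Gamma$ has a finite-index subgroup $\cong\mathbb{Z}^t$ with $t\geq1$ acting on $\mathbb{C}^k$ through a rank-$t$ lattice of translations and with precompact image in $\mathrm{Isom}(M_1\times\cdots\times M_s)$; the point is that each $M_j$, being K\"ahler-irreducible, contains no line and hence has no nontrivial Clifford translation, so the ``linearly drifting'' part of $\Gamma$ must lie in the flat factor. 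Averaging a $\Gamma$-invariant $h\in\mathcal{O}_d(\tilde{M})$ over the compact closure of the image of this $\mathbb{Z}^t$ in $\mathrm{Isom}(M_1\times\cdots\times M_s)$, and decomposing into isotypic pieces, reduces matters to a one-variable claim: a polynomial growth holomorphic function on $\mathbb{C}^k$ which is periodic under a real lattice of positive rank is constant along the complexified span of that lattice, while one transforming by a nontrivial unitary multiplier vanishes (both follow from a Fourier expansion in the lattice directions). Consequently $h$ descends to $\mathbb{C}^{k'}\times M_1\times\cdots\times M_s$ with $k'\leq k-1$, so $\dim\mathcal{O}_d(M)\leq\dim\mathcal{O}_d(\mathbb{C}^{k'}\times M_1\times\cdots\times M_s)=o(d^n)$ by \cite{[CFLZ]} again (the complex dimension being $k'+(n-k)<n$), the desired contradiction.

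I expect the main obstacle to be exactly this last step: showing that the failure of maximal volume growth for $M$ forces the ``volume defect'' into the flat factor $\mathbb{C}^k$, which is where the Liouville computation applies. Concretely this requires controlling the $\pi_1(M)$-action on the K\"ahler-irreducible factors $M_j$ (ruling out linear drift there) and organizing the averaging over the closure of that action; by contrast the product estimate for $\mathcal{O}_d$, the reduction to the non-split factors via Theorem~\ref{thm1}, and the Fourier argument on $\mathbb{C}^k$ are routine.
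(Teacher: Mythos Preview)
Your approach is unnecessarily indirect and misses the paper's key observation. The paper's proof of Corollary~\ref{cor1} is essentially one sentence: in the proof of Theorem~\ref{thm1}, the hypothesis that $\tilde{M}$ does not split is used \emph{only} in Lemma~\ref{lm2}, whose sole purpose is to produce the estimate $h_d=\dim(\mathcal{O}_d(M))\geq cd^n$. Since Corollary~\ref{cor1} assumes this estimate as a hypothesis, Lemma~\ref{lm2} is not needed, and the remainder of the argument --- blowing down, passing to a regular point of the limit, blowing up to $\mathbb{R}^k$ with $k\leq 2n-1$, transporting the functions via Lemma~\ref{lm1}, and deriving the contradiction $h_d\leq C(n,k)d^{n-1}$ in Cases~1 and~2 --- goes through verbatim with no reference to splitting or to $\pi_1(M)$ at all. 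You never have to touch the universal cover.

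By contrast, your route applies Theorem~\ref{thm1} as a black box to each de~Rham factor and then tries to descend from $\tilde{M}$ to $M$. This forces you into the structure theory of the $\pi_1(M)$-action on the factors, and the step you flag as the ``main obstacle'' is a genuine one: the claim that a finite-index $\mathbb{Z}^t$ acts with precompact image on $M_1\times\cdots\times M_s$ does not follow merely from the absence of lines in the $M_j$ (no Clifford translations is weaker than compactness of the isometry group), and the averaging/Fourier reduction on $\mathbb{C}^k$ would need a careful setup. None of this is needed. The moral is that the contradiction in the proof of Theorem~\ref{thm1} is produced entirely from the inequality $\dim(\mathcal{O}_d(M))\geq cd^n$ together with Gromov--Hausdorff limits and the three circle theorem; the irreducibility of $\tilde{M}$ enters only as one way to manufacture that inequality.
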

\begin{remark}
Corollary \ref{cor1} holds under the weaker assumption that $M$ has nonnegative Ricci curvature and nonnegative holomorphic sectional curvature. At this moment, it is unclear to the author whether we still have corollary \ref{cor1} if we only assume the Ricci curvature to be nonnegative.
\end{remark}

It is interesting to compare the corollary with a theorem of Colding and Minicozzi \cite{[CM6]} (corollary $6.51$):
\begin{theorem}[Colding-Minicozzi]
Let $M^m$ be a complete noncompact Riemannian manifold with nonnegative Ricci curvature. Suppose there exists a positive constant $c>0$ so that $dim(\mathcal{H}_d(M))\geq cd^{m-1}$ for some sufficiently large $d$, then $M$ is of maximal volume growth.
\end{theorem}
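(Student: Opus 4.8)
The plan is to prove the contrapositive, via a sharp upper bound for $\dim(\mathcal{H}_d(M))$ in terms of the asymptotic volume growth. Let $\omega_m$ be the volume of the Euclidean unit $m$-ball and set
$$V_M := \lim_{r\to\infty}\frac{Vol(B(p,r))}{\omega_m r^m}.$$
By the Bishop--Gromov comparison the ratio $Vol(B(p,r))/(\omega_m r^m)$ is nonincreasing in $r$, so $V_M$ exists, is independent of $p$, lies in $[0,1]$, and $M$ has maximal volume growth precisely when $V_M>0$. Thus it suffices to establish the Colding--Minicozzi estimate
$$\limsup_{d\to\infty}\frac{\dim(\mathcal{H}_d(M))}{d^{m-1}}\leq C_m\,V_M,$$
with $C_m$ depending only on $m$. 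Indeed, if $M$ were not of maximal volume growth then $V_M=0$, forcing $\dim(\mathcal{H}_d(M))=o(d^{m-1})$; this contradicts the hypothesis $\dim(\mathcal{H}_d(M))\geq c\,d^{m-1}$ once $d$ is large enough, which is exactly the meaning of ``some sufficiently large $d$'', the threshold depending on $c$ and $m$.

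First I would assemble the analytic consequences of nonnegative Ricci curvature, all with constants depending only on $m$: volume doubling from Bishop--Gromov; a scale-invariant Neumann--Poincar\'e inequality on balls from Buser's inequality; the resulting mean value inequality
$$\sup_{B(p,r)}u\leq\frac{C_m}{Vol(B(p,2r))}\int_{B(p,2r)}u$$
for nonnegative subharmonic $u$ (Moser iteration); and the reverse Poincar\'e (Caccioppoli) inequality $\int_{B(p,r)}|\nabla u|^2\leq C_m\,r^{-2}\int_{B(p,2r)}u^2$ for harmonic $u$. Crucially, none of these constants deteriorate as the volume growth shrinks; the volume ratio enters the final bound only through comparisons of $Vol(B(p,r))$ across scales.

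The core is the Colding--Minicozzi counting lemma applied to an arbitrary finite-dimensional $K\subset\mathcal{H}_d(M)$ carrying the scale of inner products $A_r(u,v)=\int_{B(p,r)}uv$. For an $A_{\beta r}$-orthonormal basis of $K$, the mean value inequality bounds how concentrated a harmonic function's $L^2$ mass can be near the boundary of $B(p,\beta r)$, while the Caccioppoli inequality controls its energy; together they yield a lower bound for the trace $\mathrm{tr}_{A_{\beta r}}A_r$ valid for every subspace. Comparing this trace along a geometric sequence of scales $r,\beta r,\beta^2 r,\dots$ converts the polynomial growth order $d$ into an explicit bound on $k=\dim K$; feeding in the Bishop--Gromov volume ratios at each scale, and letting the base scale tend to infinity so that $Vol(B(p,r))/(\omega_m r^m)$ is within $\epsilon$ of $V_M$, produces the displayed $\limsup$ estimate with the factor $V_M$.

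The main obstacle is precisely this quantitative sharpening: obtaining the correct power $d^{m-1}$ (rather than a lossy $d^{m}$) together with the correct multiplicative dependence on $V_M$. This requires applying the mean value and trace estimates only at large scales, where $Vol(B(p,r))/(\omega_m r^m)$ has essentially converged to $V_M$, and then sending the scale factor $\beta$ to $1$ and the base scale to infinity in coordination with $d\to\infty$ so that every error term is genuinely $o(d^{m-1})$. Once this interleaving of limits is carried out, the submaximal case $V_M=0$ yields $\dim(\mathcal{H}_d(M))=o(d^{m-1})$, and the theorem follows by contraposition.
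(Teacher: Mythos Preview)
The paper does not prove this theorem; it is quoted from Colding--Minicozzi (corollary~6.51 of \cite{[CM6]}) purely as a point of comparison with the paper's own Corollary~1, and no argument is supplied here. So there is no ``paper's own proof'' to compare against.

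That said, your sketch is essentially the strategy of the original Colding--Minicozzi argument: one proves the Weyl-type asymptotic bound
\[
\limsup_{d\to\infty}\, d^{-(m-1)}\dim(\mathcal{H}_d(M)) \;\leq\; C_m\, V_M
\]
via the trace-comparison mechanism across a geometric sequence of scales, using volume doubling, the Neumann--Poincar\'e inequality, the mean value inequality, and reverse Poincar\'e; then $V_M=0$ forces $\dim(\mathcal{H}_d(M))=o(d^{m-1})$, contradicting the hypothesis. The outline is correct in its broad strokes, though it remains a plan rather than a proof: the step you yourself flag as the obstacle---extracting the sharp multiplicative factor $V_M$ rather than merely a dimensional constant, by working only at scales where the volume ratio has essentially converged and then interleaving the limits $\beta\to 1$, $r\to\infty$, $d\to\infty$---is precisely the substance of \cite{[CM6]} and requires genuine care to execute.
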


\begin{cor}\label{cor2}
Let $M^n$ be a complete noncompact K\"ahler manifold with nonnegative bisectional curvature. Let $p\in M$. Suppose the Ricci curvature is positive at one point and the scalar curvature has average quadratic decay, i.e., (\ref{eq1}) holds. Then $M$ is of maximal volume growth.
\end{cor}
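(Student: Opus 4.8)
The plan is to deduce Corollary~\ref{cor2} from Theorem~\ref{thm1} by passing to the de Rham decomposition of the universal cover. Two preliminary facts enter. First, by known results of Ni \cite{[N1]} and Ni--Tam \cite{[NT1]} (solving the Poincar\'e--Lelong equation $\sqrt{-1}\,\partial\bar\partial u=\mathrm{Ric}$ with $u=O(\log r)$, together with the accompanying heat-equation/monotonicity estimates), a complete noncompact K\"ahler manifold with nonnegative bisectional curvature, average quadratic scalar curvature decay, and $\mathrm{Ric}>0$ at some point (so that $u$ is nonconstant) carries a nonconstant holomorphic function of polynomial growth. Second, since $\tilde M\to M$ is a local isometry, the curvature hypotheses lift and a nonconstant polynomial-growth holomorphic function on $M$ pulls back to one on $\tilde M$. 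I would next argue that $\pi_1(M)$ must be finite: the condition $\mathrm{Ric}>0$ at a point lifts to $\tilde M$, whereas if $\pi_1(M)$ were infinite then by the Cheeger--Gromoll splitting theorem $\tilde M$ would contain a line and split off an $\mathbb R$-factor, which on a K\"ahler manifold ($J$ parallel) forces a splitting $\tilde M=\mathbb C\times N$; but then $\mathrm{Ric}_{\tilde M}$ vanishes identically on the $\mathbb C$-factor, contradicting its positivity at the lifted point. Hence $\tilde M\to M$ is a finite covering, so $M$ has maximal volume growth if and only if $\tilde M$ does, and (transporting the average bound~(\ref{eq1}) across the finite cover via Bishop--Gromov) $\tilde M$ inherits nonnegative bisectional curvature, $\mathrm{Ric}>0$ somewhere, and average quadratic scalar curvature decay. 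Thus we may assume $M$ is simply connected.

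Now apply the de Rham decomposition, which for a K\"ahler manifold refines to an isometric and holomorphic product $M=\mathbb C^{k}\times M_1\times\cdots\times M_s$, each $M_i$ complete, simply connected, K\"ahler, of nonnegative bisectional curvature, irreducible (so $\tilde M_i=M_i$ does not split as a product of two K\"ahler manifolds), and non-flat. Because $\mathrm{Ric}_M$ is block-diagonal for this splitting and positive definite at some $q=(q_0,q_1,\dots,q_s)$, the Euclidean factor is trivial, $k=0$, and each $\mathrm{Ric}_{M_i}$ is positive at $q_i$. From the elementary inclusions $\prod_j B_{M_j}(q_j,r/\sqrt s)\subseteq B_M(q,r)\subseteq\prod_j B_{M_j}(q_j,r)$ together with Bishop--Gromov volume comparison one gets two things: the average quadratic decay of $S_M=\sum_i S_{M_i}$ forces average quadratic decay of $S_{M_i}$ on each $M_i$; and no $M_i$ can be compact, since a compact factor would contribute a fixed positive amount $\int_{M_i}S_{M_i}>0$ of total scalar curvature to every large ball and hence obstruct the decay. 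So each $M_i$ is a complete noncompact K\"ahler manifold with nonnegative bisectional curvature, $\mathrm{Ric}>0$ at a point, and average quadratic scalar curvature decay, whence by the first preliminary fact $\mathcal O_P(M_i)\ne\mathbb C$.

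Now Theorem~\ref{thm1} applies to each $M_i$ --- it is noncompact, admits a nonconstant holomorphic function of polynomial growth, and its universal cover (itself) does not split --- and shows each $M_i$ has maximal volume growth. Then the inclusion $\prod_j B_{M_j}(q_j,r/\sqrt s)\subseteq B_M(q,r)$ shows the product $M=M_1\times\cdots\times M_s$ has maximal volume growth, and hence so does the original manifold. I expect the main obstacle to be the bookkeeping around the universal cover: verifying that the average-decay hypothesis really descends to the de Rham factors, and in particular the step establishing that $\pi_1(M)$ is finite, which is what makes the passage between $M$ and $\tilde M$ harmless. Once $M$ has been reduced to a product of irreducible factors, the argument is driven entirely by Theorem~\ref{thm1} and the cited existence of polynomial-growth holomorphic functions; the volume comparisons are routine.
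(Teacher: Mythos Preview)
Your route is considerably more elaborate than the paper's, and the weak point you yourself flag is indeed a genuine gap. The Cheeger--Gromoll splitting theorem says that a complete manifold with $Ric\geq 0$ containing a line splits off an $\mathbb{R}$-factor; it does \emph{not} say that an infinite fundamental group forces the universal cover to contain a line. That implication is a separate structural result which holds when $M$ is compact (one translates a minimizing segment by deck transformations and extracts a limit inside a fixed fundamental domain), but for noncompact $M$ the translated segments may escape every compact set of $\tilde M$ and no line need appear. So the sentence ``if $\pi_1(M)$ were infinite then by the Cheeger--Gromoll splitting theorem $\tilde M$ would contain a line'' is not justified, and without $\pi_1$ finite you cannot pass maximal volume growth from $\tilde M$ back to $M$.

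The paper avoids this entirely. It never touches $\tilde M$ or the de Rham decomposition. The observation is that the only place Theorem~\ref{thm1} uses non-splitting is inside Lemma~\ref{lm2}, whose output is the dimension bound $\dim\mathcal{O}_d(M)\geq c d^n$. Under the hypotheses of Corollary~\ref{cor2} one can obtain that same bound directly on $M$: solve the Poincar\'e--Lelong equation $\sqrt{-1}\,\partial\bar\partial u=\mathrm{Ric}$ with $u$ of logarithmic growth (Ni--Tam \cite{[NT2]}); since $\mathrm{Ric}>0$ at a point, $u$ is plurisubharmonic and strictly plurisubharmonic there, so $u$ can replace the heat-evolved weight $g(x)$ in the $L^2$ argument of Lemma~\ref{lm2}, yielding $\dim\mathcal{O}_d(M)\geq cd^n$. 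Then Corollary~\ref{cor1} (i.e.\ the Gromov--Hausdorff/three-circle argument of Theorem~\ref{thm1} with Lemma~\ref{lm2} excised) gives maximal volume growth immediately. Your ``first preliminary fact'' already contains the ingredients for this; you simply stopped at producing one nonconstant function rather than the full $cd^n$-dimensional family, which is what lets the paper bypass the splitting question altogether.
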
 
\begin{remark}
One cannot remove the condition that $M$ has Ricci positive at one point. For instance, $M$ could have a flat torus factor.
\end{remark}

The proof of theorem \ref{thm1} is different from the arguments of Ni \cite{[N1]} and Ni-Tam \cite{[NT1]}. In our proof, theory on the Gromov-Hausdorff convergence \cite{[CC1]}-\cite{[CC4]} and the three circle theorem \cite{[L]} are crucial. We argue by contradiction. First blow down the manifold, then blow up at some regular point to get a real Euclidean space whose dimension is strictly smaller than the dimension of $M$. Then by three circle theorem, we can pass all holomorphic functions with polynomial growth to that Euclidean space. Finally the contradiction comes from dimension estimate: The dimension of the Euclidean space is too low while the dimension of functions is too high.
\begin{remark}
The statement of theorem \ref{thm1} is purely on smooth manifolds.  However, in our proof, we have to turn to some possibly singular collapsed limit. The final contradiction comes
from the tangent cone of the singular collapsed space. Thus the proof of theorem \ref{thm1} might be of some independent interest.
\end{remark}

\begin{center}
\bf  {\quad Acknowledgment}
\end{center}
The author would like to express his deep gratitude to Professors John Lott, Lei Ni, Jiaping Wang for many valuable discussions during the work. He also thanks Professor Luen-Fai Tam for the interest in this work. 

\section{\bf{Preliminary results}}
In this section, we collect some results required in the proof of theorem \ref{thm1}.

First recall some convergence results for manifolds with Ricci curvature lower bound. 
Let $(M^n_i, y_i, \rho_i)$ be a sequence of pointed complete Riemannian manifolds, where $y_i\in M^n_i$ and $\rho_i$ is the metric on $M^n_i$. By Gromov's compactness theorem, if $(M^n_i, y_i, \rho_i)$ have a uniform lower bound of the Ricci curvature, then a subsequence converges to some $(M_\infty, y_\infty, \rho_\infty)$ in the Gromov-Hausdorff topology. See \cite{[G]} for the definition and basic properties of Gromov-Hausdorff convergence.
\begin{definition}
Let $K_i\subset M^n_i\to K_\infty\subset M_\infty$ in the Gromov-Hausdorff topology. Assume $\{f_i\}_{i=1}^\infty$ are functions on $M^n_i$, $f_\infty$ is a function on $M_\infty$.  
$\Phi_i$ are $\epsilon_i$-Gromov-Hausdorff approximations, $\lim\limits_{i\to\infty} \epsilon_i = 0$. If $f_i\circ \Phi_i$ converges to $f_\infty$ uniformly, we say $f_i\to f_\infty$ uniformly over $K_i\to K_\infty$.
\end{definition}
 In many applications, $f_i$ are equicontinuous. The Arzela-Ascoli theorem applies to the case when the spaces are different.  When $(M_i^n, y_i, \rho_i)\to (M_\infty, y_\infty, \rho_\infty)$ in the Gromov-Hausdorff topology, any bounded, equicontinuous sequence of functions $f_i$ has a subsequence converging uniformly to some $f_\infty$ on $M_\infty$.

As in section $9$ of \cite{[Che]}, we have the following definition.
\begin{definition}
If $\nu_i, \nu_\infty$ are Borel regular measures on $M_i^n, M_\infty$, we say that $(M^n_i, y_i, \rho_i, \nu_i)$ converges  to $(M_\infty, y_\infty, \rho_\infty, \nu_\infty)$ in the measured Gromov-Hausdorff sense, if $(M^n_i, y_i, \rho_i, \nu_i)\to (M_\infty, y_\infty, \rho_\infty, \nu_\infty)$ in the Gromov-Hausdorff topology and for any $x_i\to x_\infty$ ($x_i\in M_i, x_\infty\in M_\infty$), $r>0$, $\nu_i(B(x_i, r))\to \nu_\infty(B(x_\infty, r))$.
\end{definition}
For any sequence of manifolds with Ricci curvature lower bound, after suitable renormalization of the volume, there is a subsequence converging in the measured Gromov-Hausdorff sense. If the volume is noncollapsed, $\nu_\infty$ is just the $n$-dimensional Hausdorff measure of $M_\infty$. See \cite{[CC2]}.

Let the complete pointed metric space $(M_\infty^m, y)$ be the Gromov-Hausdorff limit of a sequence of connected pointed Riemannian manifolds, $\{(M_i^n, p_i)\}$, with $Ric(M_i)\geq 0$. Here $M_\infty^m$ has Haudorff dimension $m$ with $m\leq n$. A tangent cone at $y\in M_\infty^m$ is a complete pointed Gromov-Hausdorff limit $((M_\infty)_y, d_\infty, y_\infty)$ of $\{(M_\infty, r_i^{-1}d, y)\}$, where $d, d_\infty$ are the metrics of $M_\infty, (M_\infty)_y$ respectively, $\{r_i\}$ is a positive sequence converging to $0$.

\begin{definition}
A point $y\in M_\infty$ is called regular, if there exists some $k$ so that every tangent cone at $y$ is isometric to $\mathbb{R}^k$. A point is called singular, if it is not regular.
\end{definition}
Now we introduce a theorem of Cheeger and Colding (theorem $2.1$ in \cite{[CC2]}) which is crucial in the proof of theorem \ref{thm1}.
\begin{theorem}[Cheeger-Colding]
For any renormalized limit measure, the singular set has measure $0$. In particular, the regular points are dense.
\end{theorem}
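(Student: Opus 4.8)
The plan is to reduce the statement to a pointwise analysis of tangent cones and to show that the non-Euclidean ones occur only on a $\nu$-null set; the two engines are the almost-rigidity theorems of \cite{[CC1]} (splitting, and ``volume cone implies metric cone'') together with a measure-theoretic differentiation argument tailored to the renormalized measure $\nu$. I would first record the structural fact that legitimizes the reduction: a tangent cone $((M_\infty)_y, y_\infty)$ is, upon diagonalizing the defining double limit, itself a pointed measured Gromov--Hausdorff limit of rescalings $(M_i, r^{-1}\rho_i, x_i)$ with $x_i\to y$, and rescaling preserves $\mathrm{Ric}\geq 0$. Hence any statement proved for $M_\infty$ applies verbatim to its tangent cones, and iterated tangent cones remain of the same type; this is what makes an induction on the dimension $n$ possible.

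\emph{Metric cone structure at a.e.\ point.} For the first engine, fix $y$ and set $g_y(u)=\log\nu(B(y,e^u))$. The renormalized Bishop--Gromov inequality passes to the limit under measured convergence and shows $u\mapsto g_y(u)-nu$ is non-increasing; as $g_y$ is also non-decreasing, it is differentiable for a.e.\ scale $u$, with $g_y'(u)\in[0,n]$. A Fubini-type argument over the product of $(M_\infty,\nu)$ with the scale parameter couples $\nu$-a.e.\ $y$ with a.e.\ small scale and shows the rescaled annuli around $y$ stabilize in shape as $u\to-\infty$. Feeding this into the almost-volume-cone-implies-almost-metric-cone theorem of \cite{[CC1]}, applied upstream on the $M_i$ through the diagonalization above, yields that for $\nu$-a.e.\ $y$ every tangent cone is a metric cone $C(X)$, the local exponent $g_y'$ recording its dimension. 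I would deliberately avoid the naive density $\nu(B(y,r))/r^n$, whose limit is $+\infty$ under collapse; the correct scale-invariant object is $g_y'$.

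\emph{From metric cones to Euclidean space.} The second engine upgrades ``metric cone'' to ``Euclidean''. A metric cone $C(X)$ equals $\mathbb{R}^k$ precisely when its cross-section is a round sphere; otherwise its vertex is singular. The mechanism is that blowing up $C(X)$ at a point off the vertex produces a line along the radial geodesic, so the splitting theorem of \cite{[CC1]} gives an isometric splitting $\mathbb{R}\times C(X')$. I would organize this as a Federer-style dimension reduction: letting $\mathcal{S}_j$ be the set of $y$ at which no tangent cone splits off $\mathbb{R}^{j+1}$, the self-similarity of cones, the splitting mechanism, and the inductive hypothesis on $n$ force the non-Euclidean factor to be used up off a $\nu$-null set, so that $\nu$-a.e.\ tangent cone is some $\mathbb{R}^k$ with $k$ locally constant. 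This yields $\nu(\mathcal{S})=0$. The ``in particular'' is then immediate: $\nu$ has full support (every ball is a measured limit of balls of positive volume, so $\nu(B(x,r))>0$), whence the complement of the $\nu$-null singular set is dense.

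\emph{Main obstacle.} The crux is collapse, $m<n$. A genuinely renormalized limit measure need not be comparable to the Hausdorff measure $\mathcal{H}^m$ nor Ahlfors regular, so the Hausdorff-dimension bounds on the strata $\mathcal{S}_j$ that would suffice in the noncollapsed case ($\nu=\mathcal{H}^n$, $m=n$) do \emph{not} control $\nu(\mathcal{S})$. Each step --- the stabilization of annuli, the cone rigidity, and the dimension reduction --- must therefore be phrased directly in terms of $\nu$ and proved by passing back to the noncollapsed curvature hypothesis on the $M_i$. Carrying the Fubini/differentiation step and the ``used-up'' step through for an \emph{arbitrary} renormalized limit measure, rather than for $\mathcal{H}^n$, is where the real difficulty lies and is the technical heart of \cite{[CC2]}.
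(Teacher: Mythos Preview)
The paper does not contain a proof of this statement. It is quoted as Theorem~2.1 of \cite{[CC2]} in the preliminaries section and used as a black box in the proof of Theorem~\ref{thm1}; the author explicitly writes ``Now we introduce a theorem of Cheeger and Colding (theorem $2.1$ in \cite{[CC2]}) which is crucial in the proof of theorem \ref{thm1}.'' There is therefore no ``paper's own proof'' to compare your proposal against.

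That said, your sketch is a fair high-level summary of the strategy Cheeger and Colding actually use in \cite{[CC2]}: the monotonicity of the renormalized volume ratio survives the limit and, via a differentiation argument in the scale parameter, forces a.e.\ tangent cone to be a metric cone; then the almost-splitting theorem of \cite{[CC1]} drives a stratification/dimension-reduction that upgrades ``metric cone'' to $\mathbb{R}^k$ off a $\nu$-null set. You are also right that the collapsed case is the delicate part, since one cannot simply appeal to Hausdorff-dimension bounds on the strata to control $\nu$. If your intent was to reproduce the Cheeger--Colding argument, the outline is on target but is of course only a sketch; the actual proof in \cite{[CC2]} (see their Sections~1--2) requires substantial work to make the Fubini/differentiation step and the iterated splitting rigorous for an arbitrary renormalized limit measure. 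For the purposes of \emph{this} paper, however, a citation is all that is expected.
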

\begin{remark}
There is a typo on page $420$ in \cite{[CC2]}: $Y^m$ should be the limit of $M_i^n$, not $M_i^m$. Since this is the arbitrary Gromov-Hausdorff convergence, the dimension might decrease. See also paragraph $4$ on page $409$ in \cite{[CC2]}.
\end{remark}

For a Lipschitz function $f$ on $M_\infty$, define a norm $||f||^2_{1, 2} = ||f||^2_{L^2}+\int_{M_\infty}|Lip f|^2$, where $$Lip(f, x) =\lim\sup\limits_{y\to x}\frac{|f(y)-f(x)|}{d(x, y)}.$$
In \cite{[Che]}, a Sobolev space $H_{1, 2}$ is defined by taking the closure of the norm $||\cdot||_{1, 2}$ for Lipschitz functions.

\emph{Condition (1)}:
$M_\infty$ satisfies the volume doubling property if for any $r>0$, $x\in M_\infty$,  $\nu_\infty(B(x, 2r))\leq 2^n\nu_\infty(B(x, r))$.

\emph{Condition (2)}:
$M_\infty$ satisfies the weak Poincare inequality if $$\int_{B(x, r)}|f - \overline{f}|^2 \leq C(n)r^2\int_{B(x, 2r)}|Lip f|^2$$ for all Lipschitz functions.
Here $\overline f$ is the average of $f$ on $B(x, r)$.

In theorem $6.7$ of \cite{[CC4]}, it was proved that if $M_\infty$ satisfies the $\nu$-rectifiability condition, condition (1) and condition (2), then there is a unique differential $df$ for $f\in H_{1, 2}$.
If $f$ is Lipschitz, $\int |Lipf|^2 = \int |df|^2$. Moreover, the $H_{1, 2}$ norm becomes an inner product. Therefore $H_{1, 2}$ is a Hilbert space. Then there exists a unique self-adjoint operator $\Delta$ on $M_\infty$ such that $$\int_{M_\infty} <df, dg> = \int _{M_\infty}<\Delta f, g>$$ for all Lipschitz functions on $M_\infty$ with compact support (Of course we can extend the functions to Sobolev spaces). See theorem $6.25$ of \cite{[CC4]}.

If $M_i\to M_{\infty}$ in the measured Gromov-Hausdorff sense and that the Ricci curvature is nonnegative for all $M_i$, then the $\nu$-rectifiability of $M_\infty$ was proved in theorem $5.5$ in \cite{[CC4]}.
By the volume comparison, Condition (1) obviously holds for $M_\infty$. Condition (2) also holds. See \cite{[X]} for a proof.

In \cite{[Di1]}\cite{[X]}, the following lemma was proved:
\begin{lemma}\label{lemma-10}
Suppose $M_i$ has nonnegative Ricci curvature and $M_i\to M_\infty$ in the measured Gromov-Hausdorff sense. Let $f_i$ be Lipschitz functions on $B(x_i, 2r)\subset M_i$ satisfying $\Delta f_i = 0$; $|f_i|\leq L, |\nabla f_i|\leq L$ for some constant $L$. Assume $x_i\to x_\infty$, $f_i\to f_\infty$ on $M_\infty$. Then $\Delta f_\infty = 0$ on $B(x_\infty, r)$.
\end{lemma}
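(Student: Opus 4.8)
The plan is to show that $f_\infty$ is weakly harmonic on $B(x_\infty,r)$ with respect to the self-adjoint Laplacian $\Delta$ on $M_\infty$ described above; that is, $\int_{M_\infty}\langle df_\infty, dg\rangle = 0$ for every Lipschitz $g$ with compact support in $B(x_\infty,r)$. Rather than try to pass the pointwise equation $\Delta f_i = 0$ directly to the limit, I would use the variational characterization of harmonicity: a function harmonic on $B(x_i,2r)$ minimizes the Dirichlet energy among all competitors agreeing with it outside a compact subset of any $B(x_i,r')$, $r'<2r$; and conversely, a local energy minimizer on $M_\infty$ is weakly harmonic, since taking the first variation in the Hilbert space $H_{1,2}$ and using the definition of $\Delta$ through the Dirichlet form gives $\int\langle df_\infty,dg\rangle = 0$. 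Hence it suffices to prove the following energy-minimizing property of $f_\infty$: for every $r'<r$ and every Lipschitz $h$ on $M_\infty$ with $h=f_\infty$ outside a compact subset $K\subset B(x_\infty,r')$, one has $\int_{B(x_\infty,r')}|dh|^2 \ge \int_{B(x_\infty,r')}|df_\infty|^2$.

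To obtain this inequality I would build competitors on $M_i$ and transfer the minimality back. Using the $L^2$-approximation of $H_{1,2}(M_\infty)$ functions by $H_{1,2}(M_i)$ functions with converging energies (a Mosco-type convergence of the Dirichlet forms under measured Gromov–Hausdorff convergence, cf.\ \cite{[Di1]}, \cite{[X]}), lift $h$ to functions $\widetilde h_i$ on $M_i$ with $\widetilde h_i\to h$ and $\int|\nabla\widetilde h_i|^2\to\int_{M_\infty}|dh|^2$. Then cut off: set $h_i$ to equal $\widetilde h_i$ on $B(x_i,r'-\delta)$, to equal $f_i$ outside $B(x_i,r')$, and to interpolate linearly (in the distance to $x_i$) on the collar $B(x_i,r')\setminus B(x_i,r'-\delta)$. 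This produces Lipschitz $h_i$ on $M_i$ with $h_i\to h$ uniformly, $h_i = f_i$ near $\partial B(x_i,r')$, and $\limsup_i\int_{B(x_i,r')}|\nabla h_i|^2 \le \int_{B(x_\infty,r')}|dh|^2$. Since $f_i$ is harmonic on $B(x_i,2r)\supset\overline{B(x_i,r')}$ it minimizes energy with its own boundary data, so $\int_{B(x_i,r')}|\nabla f_i|^2\le\int_{B(x_i,r')}|\nabla h_i|^2$. Combining this with the lower semicontinuity of the Dirichlet energy under measured Gromov–Hausdorff convergence, namely $\int_{B(x_\infty,r')}|df_\infty|^2\le\liminf_i\int_{B(x_i,r')}|\nabla f_i|^2$ (a consequence of the $\nu$-rectifiable structure of \cite{[CC4]}, or of \cite{[Che]}), yields the chain $\int|df_\infty|^2 \le \liminf\int|\nabla f_i|^2 \le \limsup\int|\nabla h_i|^2 \le \int|dh|^2$, as required.

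The main obstacle is controlling the energy of the comparison functions $h_i$, and in particular the gluing across the collar $B(x_i,r')\setminus B(x_i,r'-\delta)$. The extra energy created there is of order $\delta^{-2}\,\big(\sup|h_i-f_i|\big)^2\,\mathrm{Vol}\!\big(B(x_i,r')\setminus B(x_i,r'-\delta)\big)$; because $h=f_\infty$ on a neighborhood of $\partial B(x_\infty,r')$ we have $\sup|h_i-f_i|\to0$ on that region, while Bishop–Gromov bounds the collar volume by $C\delta$, so this contribution is $O(\delta^{-1}\,o(1))$ and is killed by a diagonal choice $\delta=\delta(i)\to0$ tending to zero slowly relative to $i\to\infty$. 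The only genuinely analytic ingredients beyond this bookkeeping are the Mosco-type approximation of $H_{1,2}$-functions and the lower semicontinuity of energy, both available from \cite{[Di1]}, \cite{[X]}, \cite{[Che]}, \cite{[CC4]}; the latter can also be seen directly from the fact that for a Lipschitz function $|df_\infty|$ is the local Lipschitz constant together with the convergence of the measures. Once the energy inequality is established, the first variation argument in $H_{1,2}$ gives $\Delta f_\infty=0$ on $B(x_\infty,r)$.
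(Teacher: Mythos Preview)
The paper does not supply its own proof of this lemma; it simply attributes the result to \cite{[Di1]} and \cite{[X]} and moves on. Your variational outline---showing that $f_\infty$ is a local Dirichlet minimizer by transporting competitors via a Mosco-type convergence of the Cheeger energies, then taking the first variation in $H_{1,2}$ to obtain weak harmonicity---is precisely the mechanism underlying those references, so in substance your approach matches what the paper is citing.

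One technical point worth tightening: in the collar estimate you only track the term $\delta^{-2}\big(\sup|\widetilde h_i-f_i|\big)^2\,\mathrm{Vol}(\text{collar})$, but the interpolated function also carries $|\nabla\widetilde h_i|^2$ and $|\nabla f_i|^2$ on that annulus, and the recovered sequence $\widetilde h_i$ need not have a pointwise Lipschitz bound. The usual fix is to use that $h=f_\infty$ already on a full neighborhood of $\partial B(x_\infty,r')$: choose the cut-off radius generically (a coarea-type selection of a good $\delta$-collar on which $\int|\nabla\widetilde h_i|^2$ is small), and use the hypothesis $|\nabla f_i|\le L$ together with Bishop--Gromov to make the annular energy $o(1)$. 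With that adjustment the chain $\int|df_\infty|^2 \le \liminf\int|\nabla f_i|^2 \le \limsup\int|\nabla h_i|^2 \le \int|dh|^2$ goes through cleanly.
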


\bigskip

Next we introduce the following theorem which is corollary $1$  in \cite{[L]}. This will be another key ingredient in the proof of theorem \ref{thm1}.
\begin{theorem}\label{thm0}
Let $M$ be a complete K\"ahler manifold with nonnegative holomorphic sectional curvature, $p\in M$. For a holomorphic function $f$ on $M$, let $M(r)=\max |f(x)|$ for $x\in B(p, r)$. Then $f\in \mathcal{O}_d(M)$ if and only if $\frac{M(r)}{r^d}$ is nonincreasing.\end{theorem}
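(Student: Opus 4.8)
\medskip
\noindent\emph{Proof proposal.} My plan is to deduce the characterization from the three circle (Hadamard-type) property in its sharp form: $g(t):=\log M(e^{t})$ is a convex function of $t\in\mathbb{R}$. Granting this, the equivalence is elementary. The case of constant $f$ is trivial, so assume $f$ nonconstant; then $M$ is noncompact and, being complete, has $\overline{B(p,r)}$ compact, so $M(r)<\infty$, while the zero set of $f$ is a proper analytic subvariety, so $M(r)>0$ for every $r>0$ and $g$ is a finite nondecreasing convex function. Its right derivative $g'$ is then nondecreasing with a limit $L\in(-\infty,+\infty]$: if $L\le d$ then $g'\le d$ everywhere, so $g(t)-dt$ is nonincreasing, i.e.\ $M(r)/r^{d}$ is nonincreasing, hence bounded, so $f\in\mathcal{O}_{d}(M)$; if $L>d$ then $g(t)-dt\to+\infty$ and $f\notin\mathcal{O}_{d}(M)$. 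Since $M(r)/r^{d}$ nonincreasing is in any case bounded, the two conditions are equivalent. So it remains to prove that $g$ is convex, that is: for $r_{1}<r_{2}<r_{3}$, the log-linear function agreeing with $\log M$ at $r_{1}$ and $r_{3}$ — which has nonnegative slope $a$, since $r\mapsto M(r)$ is nondecreasing — dominates $\log M$ at $r_{2}$.

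I would prove this by a maximum principle on the closed annulus $A=\overline{B(p,r_{3})}\setminus B(p,r_{1})$. After multiplying $f$ by a constant, it amounts to showing that, with $u=\log|f|^{2}$ and $v=u-a\log r^{2}$ (where $r=d(p,\cdot)$), one has $v\le 0$ on $A$ whenever $v\le 0$ on $\partial A=\partial B(p,r_{1})\cup\partial B(p,r_{3})$; that $v\le 0$ on $\partial A$ follows from the hypothesis together with the maximum modulus principle (the maximum of $|f|$ on $\overline{B(p,\rho)}$ is attained on $\partial B(p,\rho)$). The delicate point is that $v$ is \emph{not} subharmonic — this already fails on $\mathbb{C}^{n}$ for $n\ge 2$, where the classical argument instead slices $f$ by complex lines through the origin, a device unavailable on a general K\"ahler manifold. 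Two pieces of structure compensate. First, since $f$ is holomorphic, $u$ is pluriharmonic off $\{f=0\}$, so the $J$-invariant part of $\mathrm{Hess}\,u$ vanishes; in particular $\mathrm{Hess}\,u(\partial_{r},\partial_{r})+\mathrm{Hess}\,u(J\partial_{r},J\partial_{r})=0$ wherever $f\ne0$ and $r$ is smooth, $J$ denoting the complex structure and $\partial_{r}=\nabla r$. Second, the K\"ahler identity $\nabla J=0$ together with $H\ge 0$ ($H$ the holomorphic sectional curvature) yields the sharp Hessian comparison $\mathrm{Hess}\,r(J\partial_{r},J\partial_{r})\le 1/r$ along a minimizing unit-speed geodesic $\gamma:[0,r]\to M$ from $p$: applying the index lemma with the comparison field $Y(s)=\frac{s}{r}J\gamma'(s)$ gives $\nabla_{\gamma'}Y=\frac1rJ\gamma'$ (because $\gamma$ is a geodesic and $\nabla J=0$) and $\langle R(Y,\gamma')\gamma',Y\rangle=\frac{s^{2}}{r^{2}}H(\gamma'(s))\ge 0$, whence $\mathrm{Hess}\,r(J\partial_{r},J\partial_{r})\le\int_{0}^{r}\big(\frac1{r^{2}}-\frac{s^{2}}{r^{2}}H(\gamma'(s))\big)\,ds\le\frac1r$.

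Combining these with $\mathrm{Hess}(\log r^{2})(\partial_{r},\partial_{r})=-2/r^{2}$ and $\mathrm{Hess}(\log r^{2})(J\partial_{r},J\partial_{r})=\frac2r\,\mathrm{Hess}\,r(J\partial_{r},J\partial_{r})\le 2/r^{2}$, one obtains, wherever $f\ne0$ and $r$ is smooth,
\[
\mathrm{Hess}\,v(\partial_{r},\partial_{r})+\mathrm{Hess}\,v(J\partial_{r},J\partial_{r})=-a\Big(\mathrm{Hess}(\log r^{2})(\partial_{r},\partial_{r})+\mathrm{Hess}(\log r^{2})(J\partial_{r},J\partial_{r})\Big)\ge 0 ,
\]
so $v$ is subharmonic for the \emph{degenerate} second-order operator that only sees the complex line spanned by $\partial_{r}$. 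I would then conclude via a strong maximum principle for this operator: a hypothetical positive interior maximum of $v$ would force $\mathrm{Hess}\,v$ to vanish in the directions $\partial_{r}$ and $J\partial_{r}$, and the maximum set would then have to propagate along the radial geodesic flow (the $\partial_{r}$-flow, which moves between spheres) and its $J$-rotation (the $J\partial_{r}$-flow, which stays on a sphere $\partial B(p,\cdot)$) all the way to $\partial A$, contradicting $v\le 0$ there; hence $v\le 0$ on $A$, which is the three circle inequality. I expect the main obstacle to be making this degenerate maximum principle rigorous — the inequality controls $v$ in only two of the $2n$ real directions, so the classical strong maximum principle does not apply verbatim — while simultaneously handling the cut locus of $p$, where $r$ fails to be smooth (treated via upper barriers for $r$), and the zero divisor of $f$, where $v=-\infty$ (harmless for an upper bound, but a nuisance to track). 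A natural remedy is to add to $v$ a small multiple of a function strictly subharmonic for the degenerate operator, deal with the (locally flat) directions where $H$ vanishes separately, and let the perturbation tend to $0$ at the end; organizing this, together with the barrier arguments near the cut locus and near $\{f=0\}$, is where the real work lies.
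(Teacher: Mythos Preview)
The paper does not prove this statement: Theorem~\ref{thm0} is quoted verbatim as Corollary~1 of \cite{[L]} and used as a black box, so there is no in-paper argument to compare against. That said, your outline is very much in the spirit of the proof in \cite{[L]}. The reduction of the ``if and only if'' to convexity of $t\mapsto\log M(e^{t})$ is correct and standard. The two structural inputs you isolate are exactly the right ones: (i) pluriharmonicity of $\log|f|^{2}$ gives $\mathrm{Hess}\,u(\partial_{r},\partial_{r})+\mathrm{Hess}\,u(J\partial_{r},J\partial_{r})=0$ off the zero set, and (ii) the index-lemma computation with the comparison field $Y(s)=\tfrac{s}{r}J\gamma'(s)$ (using $\nabla J=0$) yields the sharp bound $\mathrm{Hess}\,r(J\partial_{r},J\partial_{r})\le 1/r$ under $H\ge 0$. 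These are precisely the ingredients in \cite{[L]}.

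Where your proposal is weakest is the endgame. You phrase it as a degenerate strong maximum principle on the annulus for the two-dimensional operator $X\mapsto\mathrm{Hess}\,v(\partial_{r},\partial_{r})+\mathrm{Hess}\,v(J\partial_{r},J\partial_{r})$, and you are right that Bony-type propagation is delicate here (the two directions do not generate the tangent space, and the cut locus intervenes). The cleaner route, and the one actually taken in \cite{[L]}, avoids this: one works pointwise at a maximizer $x_{r}\in\partial B(p,r)$ of $|f|$. Since $J\partial_{r}$ is tangent to the sphere and $x_{r}$ is a spherical maximum, the \emph{intrinsic} sphere Hessian of $u$ in that direction is $\le 0$; expressing it via the second fundamental form $\mathrm{Hess}\,r$ and combining with (i) and (ii) gives a second-order ODE inequality for $\log M(r)$ in $r$ (in the barrier/viscosity sense where $r$ or $M$ is not smooth), from which log-convexity follows directly. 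This sidesteps the degenerate-PDE maximum principle entirely and handles the cut locus by the usual Calabi barrier trick for $r$. If you reorganize your argument this way, the ``real work'' you anticipate largely disappears.
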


\section{\bf{Proof of theorem \ref{thm1}}}

\emph{Proof of theorem \ref{thm1}:}
Assume $M^n$ is not of maximal volume growth. Fix a point $p\in M$, consider the rescaled sequence of manifolds $(M'_i, p_i, g'_i) = (M, p, r_i^{-2}g)$ where $r_i$ is a sequence tending to $\infty$. Then by Gromov's compactness theorem, we may assume $(M'_i, p_i, g'_i)\to (N, p_\infty, g_\infty)$ in the measured Gromov-Hausdorff sense where $N$ is a metric measured space. By our assumption and theorem $3.1$ in \cite{[CC2]}, $N$ has Hausdorff dimension less than or equal to $2n-1$. Now theorem 2.1 in \cite{[CC2]} implies that the regular points for $N$ are dense.  Therefore we can find a point $q\in N$ where the tangent cone is isometric to $\mathbb{R}^k$. Here $k\leq 2n-1$. This means that for any $\epsilon>0$, $R>0$, we can find a fixed $r>0$ so that the metric ball $(B_{g_\infty}(q, rR), \frac{1}{r^2}g_\infty)$ is $\epsilon$-Gromov-Hausdorff close to $B(0, R)$ in $\mathbb{R}^k$. Let $R_1 = dist_{g_\infty}(q, p_\infty)$. As $(M'_i, p_i, g'_i)\to (N, p_\infty, g_\infty)$, for all large $i$, we can find points $q_i'\in B_{g'_i}(p_i, R_1+1)$ so that 
$(B_{g'_i}(q_i', rR), \frac{1}{r^2}g'_i)$ is $\epsilon$-Gromov-Hausdorff close to $(B_{g_\infty}(q, rR), \frac{1}{r^2}g_\infty)$.
Therefore, we can find $q_i\in M, d_i>0$ such that $(M_i, q_i, g_i) = (M, q_i, d_i^{-2}g)$ pointed converges to ($\mathbb{R}^k, 0, \nu$) in the measured Gromov-Haudorff sense. Moreover, $\nu$ is proportional to the standard measure on $\mathbb{R}^k$. For the last statement, one can refer to proposition $1.35$ in \cite{[CC2]} or remark 1.35 in \cite{[CC3]}.

\begin{lemma}\label{lm1}
Let $(N_i^n, p_i, \nu_i)$ be a sequence of pointed complete noncompact K\"ahler manifolds with nonnegative bisectional curvature. Here $\nu_i$ is the standard volume form on $N_i$. After certain renormalization of $\nu_i$, assume $(N_i, p_i, \nu_i)$ converges to $(N_\infty, p_\infty, \nu_\infty)$ in the measured Gromov-Hausdorff sense, where $(N_\infty, p_\infty, \nu_\infty)$ is a metric measured space which is not necessarily smooth. Let $d$ be a fixed positive number. We further assume for each $i$, there exist $k$ linearly independent holomorphic functions $g^j_i\in\mathcal{O}_d(N_i)$, where $j$ is the index from $1$ to $k$. Set $A_i= span\{g_i^j\}$. Then $A_i$ converges to a $k$ dimensional space of complex harmonic functions $A_\infty$ on $N_\infty$ with respect to the measure $\nu_\infty$. Moreover, for any $f\in A_\infty$, $f$ is of polynomial growth of order $d$ on $N_\infty$.
\end{lemma}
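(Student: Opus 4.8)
\emph{Proof sketch.} The plan is to normalize the given holomorphic functions so that the standard Gromov--Hausdorff machinery applies, extract a limit along a subsequence, and then verify the three assertions separately: harmonicity of the limits, the fact that the complex dimension does not drop, and the polynomial growth bound.

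First I would rescale so that $\nu_\infty(B(p_\infty,1))>0$, hence $\nu_i(B(p_i,1))$ is bounded below by a positive constant, and replace $\{g_i^j\}_{j=1}^k$ by a basis of $A_i$ that is orthonormal in $L^2(B(p_i,1),\nu_i)$. The key point is to convert this $L^2$ normalization into scale-invariant pointwise control. Since $g_i^j$ is holomorphic, $|g_i^j|^2$ is subharmonic, and $\mathrm{Ric}\ge 0$ gives the mean value inequality, so $M_{g_i^j}(1/2)^2\le C(n)\,\nu_i(B(p_i,1))^{-1}\int_{B(p_i,1)}|g_i^j|^2\,d\nu_i = C(n)\,\nu_i(B(p_i,1))^{-1}$; and because $g_i^j\in\mathcal O_d(N_i)$ and $N_i$ has nonnegative holomorphic sectional curvature, Theorem \ref{thm0} makes $M_{g_i^j}(r)/r^d$ nonincreasing, so $M_{g_i^j}(R)\le 2^d M_{g_i^j}(1/2)\,R^d$ for all $R\ge 1$. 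Thus each $g_i^j$ is uniformly bounded on every fixed ball $B(p_i,R)$, and the Cheng--Yau gradient estimate (using $\mathrm{Ric}\ge0$) makes it uniformly Lipschitz there. By the Arzela--Ascoli theorem for varying spaces recalled in Section 2 and a diagonal argument over $R\to\infty$, after passing to a subsequence $g_i^j\to g_\infty^j$ uniformly on compact subsets of $N_\infty$ for each $j$; set $A_\infty=\operatorname{span}_{\mathbb C}\{g_\infty^j\}$.

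Now the three assertions. \emph{Harmonicity}: each $g_i^j$ is holomorphic, hence harmonic since the metric is K\"ahler, and has locally uniformly bounded gradient, so Lemma \ref{lemma-10} (applied to real and imaginary parts on each $B(p_\infty,R)$) gives that $g_\infty^j$, and hence every element of $A_\infty$, is harmonic on $N_\infty$ for the Cheeger--Colding Laplacian of $\nu_\infty$. \emph{Dimension}: uniform convergence on $B(p_\infty,1)$ together with the measured Gromov--Hausdorff convergence $\nu_i\to\nu_\infty$ lets one pass the pairings $\int_{B(p_i,1)}g_i^j\,\overline{g_i^l}\,d\nu_i=\delta_{jl}$ to the limit (choosing the normalization radius generic so that its boundary is $\nu_\infty$-null), so $\{g_\infty^j\}$ is orthonormal in $L^2(B(p_\infty,1),\nu_\infty)$, in particular linearly independent, and $\dim_{\mathbb C}A_\infty=k$. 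The same computation with $c_i^j=\int_{B(p_i,1)}f_i\,\overline{g_i^j}\,d\nu_i$ shows that the coefficients of any sequence $f_i\in A_i$ converging uniformly on compacta stay bounded, subconverge, and identify the limit as an element of $A_\infty$; this is the sense in which $A_i\to A_\infty$. \emph{Polynomial growth}: given $f=\sum_j c^j g_\infty^j\in A_\infty$, put $h_i=\sum_j c^j g_i^j\in\mathcal O_d(N_i)$. Then $M_{h_i}(1)\le\sum_j|c^j|M_{g_i^j}(1)$ is uniformly bounded, so $M_{h_i}(R)\le CR^d$ for $R\ge 1$ by Theorem \ref{thm0}; letting $i\to\infty$ with $x_i\to x_\infty$ for an arbitrary $x_\infty\in B(p_\infty,R)$ gives $\sup_{B(p_\infty,R)}|f|\le C(R+1)^d$, so $f$ is of polynomial growth of order $d$ on $N_\infty$.

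The main obstacle is the dimension count: a priori, linearly independent holomorphic functions can degenerate to linearly dependent ones, or blow up pointwise, along a collapsing limit, and there is no complex structure on $N_\infty$ to fall back on. Both dangers are defeated by the same device — $L^2$-normalizing on a fixed ball and then invoking the three circle theorem (Theorem \ref{thm0}) to upgrade that $L^2$ bound to a uniform polynomial pointwise bound on all scales — which is exactly where the K\"ahler hypothesis and the curvature sign are used. The remaining analytic point, transporting $L^2$ inner products across a measured Gromov--Hausdorff limit on a possibly singular space, is routine given the rectifiability, doubling and Poincar\'e properties of $N_\infty$ recalled above.
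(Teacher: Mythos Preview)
Your proposal is correct and follows essentially the same route as the paper's proof: $L^2$-orthonormalize on the unit ball, upgrade to uniform pointwise bounds via the Li--Schoen mean value inequality plus the three circle theorem (Theorem~\ref{thm0}), apply Cheng--Yau for a Lipschitz bound, extract limits by Arzela--Ascoli, invoke Lemma~\ref{lemma-10} for harmonicity, and pass the orthonormality relations through the measured Gromov--Hausdorff convergence to retain linear independence. The only cosmetic differences are that the paper normalizes with the \emph{averaged} $L^2$ inner product $\dashint_{B(p_i,1)}$ (which sidesteps your remark about $\nu_i(B(p_i,1))$ being bounded below), and deduces polynomial growth by noting that the monotonicity of $M(r)/r^d$ itself passes to the uniform limit rather than bounding $M_{h_i}(R)$ first; neither difference is substantive.
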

\begin{proof}
For each $i$, we choose a unitary frame $g_i^j$ for $A_i$ with repect to the average of the $L^2$ norm of $B(p_i, 1)\subset N_i$. That is, $$\dashint_{B(p_i, 1)}g_i^{j}\overline{g_i^s} = \delta_{js}.$$ Let $x\in B(p_i, \frac{1}{2})$. As $|g_i^j|$ is subharmonic, by the mean value inequality of Li and Schoen \cite{[LS]} and the volume comparison, $$|g_i^j(x)|^2\leq C(n)\dashint_{B(x, \frac{1}{2})}|g_i^j|^2\leq (C_1(n))^2\dashint_{B(p_i, 1)}|g_i^j|^2 = (C_1(n))^2.$$ Here $C(n), C_1(n)$ are positive constants depending only on $n$. Therefore, theorem \ref{thm0} implies that for $x\in B(p_i, r)$, $$|g_i^j(x)|\leq C_1(n)(2r)^d.$$ Here $r\geq \frac{1}{2}$. Cheng-Yau's gradient estimate \cite{[CY]} implies that $$|\nabla g_i^j|\leq C_2(n)r^{d-1}$$ in $B(p_i, r)$ for any $i$ and $r>0$. By Arzela-Ascoli theorem and lemma \ref{lemma-10}, we may assume $g^j_i$ converges to complex harmonic functions $f_j (j = 1,...,k)$ on $N_\infty$ with respect to $\nu_\infty$. Let $M_j(r)$ be the maximum of $|f_j(y)|$ for $y\in B(p_\infty, r)$. Uniform convergence and theorem \ref{thm0} imply that $\frac{M_j(r)}{r^d}$ is monotonic nonincreasing. Therefore, $f_j$ is of polynomial growth of order $d$. Moreover, since $g^j_i$ is a unitary frame and the convergence is uniform on $B(p_i, 1)$, $f_j(j=1,...,k)$ satisfies $$\dashint_{B(p_\infty, 1)}f_j\overline{f_s} = \delta_{js}.$$ Thus they are linearly independent. Define $A_\infty = span\{f_j\}$. This completes the proof of the lemma.
\end{proof}
\begin{lemma}[Ni-Tam]\label{lm2}
Let $M^n$ be a complete noncompact K\"ahler manifold with nonnegative bisectional curvature. Suppose the universal cover $\tilde{M}$ does not split as a product of two K\"ahler manifolds and there exists a nonconstant holomorphic function with polynomial growth, then $dim(\mathcal{O}_d(M))\geq cd^n$ for all sufficiently large $d$. Here $c$ is a positive constant depending only on $M$.
\end{lemma}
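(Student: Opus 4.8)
The plan is to manufacture, out of a single nonconstant holomorphic function of polynomial growth, a whole space of such functions of dimension $\gtrsim d^n$, by first producing $n$ ``independent'' generators and then taking products. So I would reduce the lemma to the following claim: there exist holomorphic functions $f_1,\dots,f_n$ on $M$, with $f_j\in\mathcal O_{d_j}(M)$, such that $df_1\wedge\cdots\wedge df_n\not\equiv 0$. Granting this, put $D=\max_j d_j$ and consider the monomials $f^a=f_1^{a_1}\cdots f_n^{a_n}$ with $|a|=\sum_j a_j\le N$. Since $|f_j|$ is subharmonic, $M_{f^a}(r)\le\prod_j M_{f_j}(r)^{a_j}\le Cr^{\sum_j a_jd_j}\le Cr^{ND}$, so $f^a\in\mathcal O_{ND}(M)$. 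These $\binom{N+n}{n}$ monomials are linearly independent over $\mathbb C$: at a point $x_0$ where $df_1\wedge\cdots\wedge df_n\neq 0$ the map $F=(f_1,\dots,f_n)$ is a local biholomorphism onto an open set $U\subset\mathbb C^n$, so any relation $\sum_{|a|\le N}c_af^a\equiv 0$ would force the polynomial $\sum_{|a|\le N}c_az^a$ to vanish on $U$, hence to vanish identically. Taking $d=ND$ (and $N=\lfloor d/D\rfloor$ for general large $d$) yields $\dim\mathcal O_d(M)\ge\binom{\lfloor d/D\rfloor+n}{n}\ge c(M)\,d^n$ for all sufficiently large $d$.

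Everything then rests on producing the $n$ functions with nondegenerate Jacobian, and this is where the hypotheses and the Ni--Tam machinery enter. The complex one-dimensional case is classical (see \cite{[LT]}), which I would take as the starting point. In general, the existence of a nonconstant $g\in\mathcal O_P(M)$ together with the sharp monotonicity formula of Ni \cite{[N1]} forces the scalar curvature of $M$ to decay fast enough (at worst average quadratic decay) that the Poincar\'e--Lelong equation $\sqrt{-1}\,\partial\bar\partial u=\mathrm{Ric}$ is solvable on $M$ with $u$ of at most logarithmic growth; this is the heat-equation construction of Ni--Tam \cite{[NT1]}. The resulting $u$ is plurisubharmonic, and the structure theory of \cite{[NT1]} then gives a dichotomy: either $u$ has a flat direction, which by the splitting theorems for K\"ahler manifolds with nonnegative bisectional curvature forces $\tilde M$ to split --- excluded by hypothesis, and in the presence of such a splitting the statement would in any case follow by reduction to the lower-dimensional factors --- or $u$ is somewhere strictly plurisubharmonic, in which case, combining $u$ with $g$ and working near a regular level set of $g$, one peels off additional holomorphic functions of polynomial growth until $n$ of them have linearly independent differentials.

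The main obstacle is precisely this last point: upgrading ``$u$ is somewhere strictly plurisubharmonic'' to ``there are $n$ holomorphic functions of polynomial growth with $df_1\wedge\cdots\wedge df_n\not\equiv 0$''. This is the genuine content of the Ni--Tam theorem; it needs the Poincar\'e--Lelong and heat-kernel estimates to keep the growth of the newly constructed functions polynomial, and the structure theory for complete K\"ahler manifolds with nonnegative bisectional curvature to propagate strict plurisubharmonicity (equivalently, to rule out hidden splittings) over all of $M$, most naturally via an induction on $n$. Checking that Ni's monotonicity formula really delivers curvature decay strong enough to solve Poincar\'e--Lelong is a secondary technical point, but a quotable consequence of \cite{[N1]}. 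In the write-up I would cite the full construction as the Ni--Tam theorem and carry out in detail only the elementary reduction of the first paragraph.
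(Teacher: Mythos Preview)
Your monomial-counting reduction in the first paragraph is correct and matches the paper's final step exactly. The divergence is in how the $n$ generators $f_1,\dots,f_n$ with $df_1\wedge\cdots\wedge df_n\not\equiv 0$ are produced.

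There is a genuine gap in your route. You propose to deduce average quadratic scalar curvature decay from the existence of a nonconstant $g\in\mathcal O_P(M)$ via Ni's monotonicity formula, then solve Poincar\'e--Lelong for $\mathrm{Ric}$. But the implication ``$\mathcal O_P(M)\neq\mathbb C\Rightarrow$ average quadratic curvature decay'' is precisely part of Ni's \emph{conjecture}, not a theorem in \cite{[N1]}; you cannot quote it. The paper sidesteps this entirely: instead of building the weight from $\mathrm{Ric}$, it runs the heat flow directly on $\log(|f|^2+1)$, setting $g(x)=\int H(x,y,1)\log(|f(y)|^2+1)\,dy$. Ni--Tam theory \cite{[NT1]} then gives that $g$ is of logarithmic growth and, because $\tilde M$ does not split, strictly plurisubharmonic everywhere with $(\partial\bar\partial g)^n>0$. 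No curvature decay and no Poincar\'e--Lelong are needed. (Your Poincar\'e--Lelong route is what the paper uses for Corollary~\ref{cor2}, where average quadratic decay is a \emph{hypothesis}.)

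Your ``peeling off by induction near a regular level set'' is also off target. Once one has the strictly plurisubharmonic weight $g$ of log growth, the paper produces all $n$ functions in a single stroke by H\"ormander's $L^2$ method: fix $p$, take local coordinates $z_1,\dots,z_n$, cut them off to $h_j=\varphi z_j$, and solve $\bar\partial\eta_j=\bar\partial h_j$ with weight $e^{-Cg}$ (plus a log pole forcing $\eta_j(p)=0$), via theorem~3.2 of \cite{[N2]}. The $f_j=h_j-\eta_j$ are holomorphic, form a coordinate system at $p$, and satisfy $\int_M|f_j|^2e^{-Cg}<\infty$; since $g$ has log growth, the Li--Schoen mean value inequality gives polynomial growth of $f_j$. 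This is cleaner than an induction and is the step you should replace your second and third paragraphs with.
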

\begin{proof}
The proof is an application of the standard $L^2$ estimate \cite{[Ho]} and the Ni-Tam theory on plurisubharmonic functions \cite{[NT1]}. For reader's convenience, we include the details.
Assume $f\in \mathcal{O}_d(M)$ for some $d>0$ and $f$ is not constant. Let $H(x, y, t)$ be the heat kernel on $M$. Define $$g_t(x) = \int H(x, y, t)\log(|f(y)|^2+1)dy$$ where $H(x, y, t)$ is the heat kernel on $M$. Then $g_t(x)$ satisfies the heat equation $$(\frac{\partial}{\partial t}-\Delta)g_t(x) = 0$$ with initial condition $g_0(x) = \log(|f(x)|^2+1)$. It is easy to see that $g_0(x)$ is a plurisubharmonic function. Define $g(x) = g_1(x)$. Since the universal cover of $M$ does not split, by results of Ni and Tam \cite{[NT1]} (theorem 3.1, theorem 2.1,  corollary 1.4 in \cite{[NT1]}), $g_t(x)$ is strictly plurisubharmonic on $M$ for $t>0$; $(\partial\overline\partial g(x))^n >0$; $g(x)$ is of logarithmic growth: $0\leq g(x)\leq C_1\log (r+1)$ for some constant $C_1>0$. Let  $\{z_1, ..., z_n\}$ be the local coordinate near a point $p\in M$. Let $h_i = \varphi(x)z_i$, where $\varphi(x)$ is a cut-off function which has support inside the local coordinate neighborhood. Let $\theta_i=\overline\partial h_i$. Now apply theorem $3.2$ in \cite{[N2]}, with $E$ being the anti-canonical line bundle. We have
functions $\eta_i$ such that $\overline\partial\eta_i = \theta_i$ and $\eta_i(p) = 0$. Moreover, $\eta_i$ satisfies 
\begin{equation}\label{eq2}
\int_M|\eta_i|^2\exp(-Cg(x))<\infty. 
\end{equation}
It is easy to see that $f_i=\theta_i-\eta_i$ are holomorphic and form a coordinate system near $p$. Moreover, $f_i$ satisfies (\ref{eq2}). Applying the mean value inequality of \cite{[LS]}, we conclude that $f_i$ are of polynomial growth. The lower bound of $dim(\mathcal{O}_d(M))$ follows from simple dimension counting.
\end{proof}

We go back to the proof of theorem \ref{thm1}. Let $$h_d = dim(\mathcal{O}_d(M_i, q_i, g_i)) = dim(\mathcal{O}_d(M)).$$  By lemma \ref{lm2}, there exists a positive constant $c$ independent of $d$ so that
\begin{equation}\label{eq3}h_d\geq cd^n\end{equation} for all large $d$. Recall $(M_i, q_i, g_i)$ pointed converges to $(\mathbb{R}^k, 0, \nu)$ where $\nu$ is proportional to the standard measure of $\mathbb{R}^k$. Lemma \ref{lm1} says a sequence of unitary frames of $\mathcal{O}_d(M_i, q_i, g_i)$ converges to linearly independent complex harmonic functions $f_s (s = 1, ..., h_d)$ on $(\mathbb{R}^k, 0, \nu)$. Note that $f_s$ are harmonic with respect to the standard volume form of $\mathbb{R}^k$. 
\begin{remark}
In this case,  we cannot say $f_s$ are ``holomorphic", as $\mathbb{R}^k$ does not necessarily inherit a complex structure from $M_i$. For example, $k$ might be odd.
\end{remark}

There are only two cases:

Case $1$: $k\leq n$.
In this case, just observe that $$dim(\mathcal{H}_d(\mathbb{R}^k))\leq C(k)d^{k-1}\leq C(k)d^{n-1}.$$ We have a contradiction, as $$dim(\mathcal{H}_d(\mathbb{R}^k))\geq dim(span\{f_s\}) = h_d\geq cd^n$$ for sufficiently large $d$.

\medskip

Case 2: $2n-1\geq k\geq n+1$.
The argument in Case $1$ no longer works. However, we shall prove that $f_s$ are ``more" than harmonic on $\mathbb{R}^k$. 
In what follows, we will denote by $\Phi(u_1,..., u_k|....)$ any nonnegative functions depending on $u_1,..., u_k$ and some additional parameters such that when these parameters are fixed, $$\lim\limits_{u_1,..., u_k\to 0}\Phi(u_1,..., u_k|...) = 0.$$ We also let $C(n)$ be positive constants depending only on $n$. The value of $C(n)$ might change from line to line.

Recall the Cheeger-Colding theory \cite{[CC1]}. Since $(M_i, q_i, g_i)$ converges to ($\mathbb{R}^k, 0$) in the Gromov-Hausdorff sense, given any $R>1$, there exist harmonic functions $b_j (j = 1,...,k)$ on $B(q_i, 3R)$ such that \begin{equation}\label{eq4}\dashint_{B(q_i, 2R)}  \sum\limits_{j}|\nabla(\nabla b_j)|^2 +\sum\limits_{j, l}|\langle\nabla b_j, \nabla b_l\rangle - \delta_{jl}|^2\leq \Phi(\frac{1}{i}|R, n)\end{equation} and \begin{equation}\label{eq5}|\nabla b_j|\leq C(n)\end{equation} in $B(q_i, 2R)$.  Moreover, when taking a diagonal sequence with $R\to \infty$, these $b_j$ converge to the standard coordinate functions on $\mathbb{R}^k$. 

Since $M_i$ is K\"ahler, $J\nabla b_j$ satisfies (\ref{eq4}) and (\ref{eq5}). That is, we replace $\nabla b_j$ by $J\nabla b_j$. The key observation is that if $k\geq n+1$, in the average sense, $span\{\nabla b_j\}$ will have nonzero intersection with $span\{J\nabla b_j\}$ due to dimension reasons. This will give a linear complex structure for some directions of $\mathbb{R}^k$. Then we can reduce the upper bound of the dimension of $span\{f_s\}$.

\begin{definition}
We say a sequence of vector fields $s^i_l(l = 1, ...., N)$ are almost orthonormal in $B(q_i, 2R)$ if $\dashint_{B(q_i, 2R)} \sum\limits_{l}|\nabla s^i_l|^2 +\sum\limits_{m, l}|\langle s^i_l,  s^i_m\rangle - \delta_{ml}|^2\leq \Phi(\frac{1}{i}|n, R)$.
\end{definition}

\begin{claim}\label{cl-1}
Let $s^i_l(l=1,...,N)$ be a sequence of almost orthonormal vector fields in $B(q_i, 2R)$. Then $N\leq 2n$.
\end{claim}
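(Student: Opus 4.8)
The plan is to reduce the claim to an elementary pointwise fact about Gram matrices, using the averaged hypothesis only at the very last step. Fix $i$ and a point $x\in B(q_i,2R)$, and let $G_i(x)$ denote the $N\times N$ symmetric matrix with entries $\langle s^i_m(x),s^i_l(x)\rangle$. Then $\sum_{m,l}|\langle s^i_l(x),s^i_m(x)\rangle-\delta_{ml}|^2$ is exactly $\|G_i(x)-I_N\|^2$, the squared Hilbert--Schmidt norm of $G_i(x)-I_N$. Since $G_i(x)$ is the Gram matrix of the $N$ vectors $s^i_1(x),\dots,s^i_N(x)$, which live in the real tangent space $T_xM_i$ of dimension $2n$, we have $\operatorname{rank}G_i(x)\le 2n$.

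Next I would invoke positive semidefiniteness: writing the eigenvalues of $G_i(x)$ as $\lambda_1\ge\cdots\ge\lambda_N\ge 0$, the rank bound forces $\lambda_{2n+1}=\cdots=\lambda_N=0$ as soon as $N>2n$, so that
\[
\|G_i(x)-I_N\|^2=\sum_{j=1}^N(\lambda_j-1)^2\;\ge\;N-2n\;\ge\;1
\]
at every point $x\in B(q_i,2R)$. Integrating this pointwise inequality then gives, for all $i$,
\[
\dashint_{B(q_i,2R)}\Big(\sum_l|\nabla s^i_l|^2+\sum_{m,l}|\langle s^i_l,s^i_m\rangle-\delta_{ml}|^2\Big)\;\ge\;\dashint_{B(q_i,2R)}\sum_{m,l}|\langle s^i_l,s^i_m\rangle-\delta_{ml}|^2\;\ge\;1,
\]
which contradicts the definition of almost orthonormality, since $\Phi(\tfrac1i\,|\,n,R)\to 0$ as $i\to\infty$ and so is $<1$ for all large $i$. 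Hence $N\le 2n$.

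I do not expect a genuine obstacle here. The two things worth keeping in mind are that the gradient term of the almost-orthonormality condition plays no role in this particular estimate (it is needed later, when one passes to the Gromov--Hausdorff limit), and that the bound $\|G_i(x)-I_N\|^2\ge 1$ holds at every single point of $B(q_i,2R)$, so that no regularity or convergence input is required before averaging — the claim is really just the statement that $2n+1$ vectors in a $2n$-dimensional inner product space cannot have Gram matrix close to the identity.
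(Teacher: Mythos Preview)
Your argument is correct and is essentially the same as the paper's: both reduce to the elementary fact that $N>2n$ vectors in a $2n$-dimensional inner product space have Gram matrix bounded away from $I_N$. The only cosmetic difference is that the paper picks a single point where the integrand is at most the average and invokes ``linear algebra'' there, whereas you make the eigenvalue bound $\|G_i(x)-I_N\|^2\ge N-2n$ explicit at every point before averaging.
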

\begin{proof}
There exists a point $x_i\in B(q_i, 2R)$ such that $\sum\limits_{m, l}|\langle s^i_l(x_i), s^i_m(x_i)\rangle - \delta_{ml}|^2\leq \Phi(\frac{1}{i}|R, n)$. Suppose $N>2n$.
If $i$ is large, there is a contradiction from linear algebra.
\end{proof}
The following is just a Schmidt orthogonalization. The argument is rather standard. However, for completeness, we include the details. 

Note that $\nabla b_j$ are almost orthogonal. 
Define \begin{equation}\label{eq6}\lambda_{j, 1} = \dashint_{B(q_i, 2R)}\langle\nabla b_j, J\nabla b_1\rangle.\end{equation}  Obviously, $|\lambda_{j, 1}|\leq C(n)$. Let \begin{equation}\label{eq7}e_1 = J\nabla b_1 - \sum\limits_{j=1}^k\lambda_{j, 1}\nabla b_j.\end{equation}

\begin{claim}\label{cl1}
$\dashint_{B(q_i, 2R)}|\langle e_1, \nabla b_j\rangle|^2 = \Phi(\frac{1}{i}|n, R)$ for $j = 1,...., k$.
\end{claim}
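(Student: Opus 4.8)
The plan is to expand $\langle e_1,\nabla b_j\rangle$ using the definition \eqref{eq7} of $e_1$ together with the almost-orthonormality of $\{\nabla b_l\}$, reducing the estimate to two error terms each of which is controlled directly by \eqref{eq4}--\eqref{eq5}. Write $\langle\nabla b_l,\nabla b_j\rangle=\delta_{lj}+\eta_{lj}$, so that $\dashint_{B(q_i,2R)}|\eta_{lj}|^2=\Phi(\tfrac1i|n,R)$ by \eqref{eq4}, and set $\phi_j:=\langle J\nabla b_1,\nabla b_j\rangle$. By symmetry of the real inner product, \eqref{eq6} says exactly that $\lambda_{j,1}$ is the mean of $\phi_j$ over $B(q_i,2R)$, and \eqref{eq7} gives
$$\langle e_1,\nabla b_j\rangle=\bigl(\phi_j-\lambda_{j,1}\bigr)-\sum_{l=1}^k\lambda_{l,1}\,\eta_{lj}.$$

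For the second term I would use $|\lambda_{l,1}|\le C(n)$, Cauchy--Schwarz, and \eqref{eq4} to get $\dashint_{B(q_i,2R)}\bigl|\sum_l\lambda_{l,1}\eta_{lj}\bigr|^2\le C(n)\sum_l\dashint_{B(q_i,2R)}|\eta_{lj}|^2=\Phi(\tfrac1i|n,R)$. For the first term, the point is that $\phi_j$ is $L^2$-close to its own average because the $b_l$ are \emph{almost affine}. Since $M_i$ is K\"ahler, $\nabla J=0$, so for any unit vector $X$ one has $X\phi_j=\langle J\nabla_X(\nabla b_1),\nabla b_j\rangle+\langle J\nabla b_1,\nabla_X(\nabla b_j)\rangle$, whence $|\nabla\phi_j|\le C(n)\bigl(|\nabla(\nabla b_1)|+|\nabla(\nabla b_j)|\bigr)$ by \eqref{eq5}; therefore $\dashint_{B(q_i,2R)}|\nabla\phi_j|^2=\Phi(\tfrac1i|n,R)$ by \eqref{eq4}. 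Because $M_i$ has nonnegative Ricci curvature, the Neumann--Poincar\'e inequality holds on $B(q_i,2R)$ with constant $C(n)R^2$, so
$$\dashint_{B(q_i,2R)}\bigl|\phi_j-\lambda_{j,1}\bigr|^2\le C(n)R^2\dashint_{B(q_i,2R)}|\nabla\phi_j|^2=\Phi(\tfrac1i|n,R).$$
Combining the two bounds via $|\langle e_1,\nabla b_j\rangle|^2\le 2|\phi_j-\lambda_{j,1}|^2+2\bigl|\sum_l\lambda_{l,1}\eta_{lj}\bigr|^2$ yields the claim, uniformly in $j=1,\dots,k$ (the implicit constants depend only on $n$, since $k\le 2n-1$).

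The only genuinely non-routine ingredient is the almost-affineness of the $b_l$ encoded in \eqref{eq4}: once one observes that $\langle J\nabla b_1,\nabla b_j\rangle$ has $L^2$-small gradient, the Poincar\'e inequality finishes the job, and the rest is bookkeeping for the Schmidt orthogonalization. A small point to watch is that the Poincar\'e inequality for $\mathrm{Ric}\ge 0$ should be invoked on a ball where \eqref{eq4}--\eqref{eq5} are available (e.g. $B(q_i,2R)$, with the $b_l$ harmonic on the slightly larger $B(q_i,3R)$), and that the resulting factor $C(n)R^2$ is harmless: with $R$ and $n$ fixed it merely rescales a quantity tending to $0$ as $i\to\infty$ and is therefore still absorbed into $\Phi(\tfrac1i|n,R)$.
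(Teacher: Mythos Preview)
Your proof is correct and follows essentially the same route as the paper's: both expand $\langle e_1,\nabla b_j\rangle$, control the cross terms using $|\lambda_{l,1}|\le C(n)$ together with the almost-orthonormality \eqref{eq4}, and handle the main term $\phi_j-\lambda_{j,1}$ by bounding $|\nabla\phi_j|$ via $\nabla J=0$ and \eqref{eq5}, then applying the Neumann--Poincar\'e inequality (the paper cites Buser for this). The only cosmetic difference is that you package the error $\langle\nabla b_l,\nabla b_j\rangle-\delta_{lj}$ as a single $\eta_{lj}$, whereas the paper separates the diagonal and off-diagonal pieces; the substance is identical.
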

\begin{proof}
Define a function \begin{equation}\label{eq8}s_j(x) = \langle J\nabla b_1, \nabla b_j\rangle.\end{equation}  By Buser \cite{[Bu]}, on $M_i$, we have the Neumann-Poincare inequality \begin{equation}\label{eq9}\dashint_{B(q_i, 2R)}|s_j(x)-\overline{s_j}|^2 \leq C(n)R^2\dashint_{B(q_i, 2R)} |\nabla s_j|^2.\end{equation} Note \begin{equation}\label{eq10}\overline{s_j} = \dashint_{B(q_i, 2R)}s_j(x) = \lambda_{j, 1}.\end{equation} Also  \begin{equation}\label{eq11}|\nabla s_j|\leq |\nabla^2 b_1||\nabla b_j|+|\nabla b_1||\nabla^2 b_j|.\end{equation} From (\ref{eq4}), (\ref{eq5}) and (\ref{eq11}), 
\begin{equation}\label{eq12}
\begin{aligned}
C(n)R^2\dashint_{B(q_i, 2R)} |\nabla s_j|^2&\leq C(n)R^2\dashint_{B(q_i, 2R)}(|\nabla^2 b_1||\nabla b_j|+|\nabla b_1||\nabla^2 b_j|)^2 \\&\leq 2C(n)R^2\dashint_{B(q_i, 2R)}(|\nabla^2 b_1|^2|\nabla b_j|^2+|\nabla b_1|^2|\nabla^2 b_j|^2)\\&\leq\Phi(\frac{1}{i}|R, n). 
\end{aligned}
\end{equation} 

By (\ref{eq4})-(\ref{eq12}),  \begin{equation}\label{eq13}
\begin{aligned}
(\dashint_{B(q_i, 2R)}|\langle e_1, \nabla b_j\rangle|^2)^{\frac{1}{2}}& = (\dashint_{B(q_i, 2R)}|s_j - \lambda_{j, 1}\langle\nabla b_j, \nabla b_j\rangle -\sum\limits_{s\neq j}\lambda_{s, 1}\langle \nabla b_s, \nabla b_j\rangle|^2)^{\frac{1}{2}}\\& \leq (\dashint_{B(q_i, 2R)}|s_j-\overline{s_j}|^2)^{\frac{1}{2}}+|\lambda_{j, 1}|(\dashint_{B(q_i, 2R)}(|\nabla b_j|^2-1)^2)^{\frac{1}{2}} \\&+ \sum\limits_{l\neq j}|\lambda_{l, 1}|(\dashint_{B(q_i, 2R)}|\langle\nabla b_j, \nabla b_l\rangle|^2)^{\frac{1}{2}}\\& \leq \Phi(\frac{1}{i}|R, n).
\end{aligned}
\end{equation} 

\end{proof}

 If no subsequence of $\dashint_{B(q_{i}, 2R)}|e_1|^2$ is converging to $0$, we can rescale $e_1$ which we still call $e_1$ so that $\dashint_{B(q_{i}, 2R)}|e_1|^2= 1$. Since the rescale factor is bounded from above (independent of $i$), we still have claim \ref{cl1}. Thus $\{\nabla b_j, e_1\}$ become almost orthonormal.  
Define \begin{equation}\label{eq100}e_2 = J\nabla b_2 - \sum\limits_{j=1}^k\lambda_{j, 2}\nabla b_j-\mu_2e_1.\end{equation} Here \begin{equation}\label{eq101}\lambda_{j, 2} = \dashint_{B(q_i, 2R)}\langle\nabla b_j, J\nabla b_2\rangle; \mu_2 =\dashint_{B(q_i, 2R)}\langle e_1, J\nabla b_2\rangle.\end{equation} It is easy to check that $e_2$ satisfies claim \ref{cl1}. If no subsequence of $\dashint_{B(q_{i}, 2R)}|e_2|^2$ is converging to $0$, we can rescale it again. Then we continue to define $e_3, e_4$ and so on. Note that in general,  $e_s$ is a linear combination of $\nabla b_1,..., \nabla b_k, J\nabla b_1,...., J\nabla b_s$.
By the assumption of Case 2, $2k \geq 2(n+1)>dim_\mathbb{R}(M)$. According to claim \ref{cl-1}, this process must stop at some $e_\lambda$ for $1\leq\lambda\leq 2n-k+1< k$ for dimension reason. That is, $\dashint_{B(q_{i}, 2R)}|e_\lambda|^2$ is converging to zero for some subsequence. Passing to that subsequence, we may assume $$\dashint_{B(q_{i}, 2R)}|J\nabla b_\lambda-\sum\limits_{j=1}^kc_j\nabla b_j -\sum\limits_{j=1}^{\lambda-1}s_jJ\nabla b_j|^2\to 0$$ for some constants $c_j$ and $s_j$.
Define $b_1' = \frac{b_\lambda-\sum\limits_{j=1}^{\lambda-1}s_jb_j}{\sqrt{1+\sum\limits_{j=1}^{\lambda-1}s_j^2}}, b_2' = \frac{\sum\limits_{j=1}^kc_jb_j}{\sqrt{1+\sum\limits_{j=1}^{\lambda-1}s_j^2}}.$ By a linear transformation of $b_1,..., b_k$, we can easily extend $b_1', b_2'$ to $b_1', b_2', ...., b_k'$ which satisfy  (\ref{eq4}) and (\ref{eq5}). Observe that
 \begin{equation}\label{-6}\dashint_{B(q_i, 2R)} |J\nabla b'_1 - \nabla b'_2|^2 \leq \Phi(\frac{1}{i}|n, R).\end{equation} 
 
Then we define $e'_3, e'_4$ and so on similar as before ($e'_2$ is skipped for an obvious reason).  Assume the process stops at $e'_{\lambda'}$. Then $3\leq{\lambda}'\leq 2n-k+3$ due to dimension reason. Suppose $2n-k+3<k$, that is $k>n+1$. Then as before, we have 
\begin{equation}\label{-7}
\dashint_{B(q_{i}, 2R)}|J\nabla b'_{\lambda'}-\sum\limits_{j=1}^kc'_j\nabla b'_j -\sum\limits_{j=1}^{\lambda'-1}s'_jJ\nabla b'_j|^2\to 0\end{equation} for some constants $c'_j$ and $s'_j$. 
 By (\ref{-6}) and the almost orthogonality of ($\nabla b_1',...., \nabla b_k'$),  $\nabla b_1', \nabla b_2'$ are almost orthonormal to $\nabla b_j', J\nabla b_j'$ for $j\geq 3$. Then we have \begin{equation}\label{-8}
\dashint_{B(q_{i}, 2R)}|J\nabla b'_{\lambda'}-\sum\limits_{j=3}^kc'_j\nabla b'_j -\sum\limits_{j=3}^{\lambda'-1}s'_jJ\nabla b'_j|^2\to 0\end{equation}  
Define $b_1'' = b_1', b_2'' = b_2', b_3''= \frac{b'_{\lambda'}-\sum\limits_{j=3}^{\lambda'-1}s'_jb'_j}{\sqrt{1+\sum\limits_{j=3}^{\lambda'-1}(s'_j)^2}}, b''_4 = \frac{\sum\limits_{j=3}^kc'_jb'_j}{\sqrt{1+\sum\limits_{j=3}^{\lambda'-1}(s'_j)^2}}$.  Note $b_1'', b_2'', b_3'', b_4''$ are almost orthogonal. By a linear transformation of $b'_1,..., b'_k$, we can easily extend the functions to $b_1'', b_2'', ...., b_k''$ which satisfy (\ref{eq4}) and (\ref{eq5}).  (\ref{-8}) and (\ref{-6}) imply
\begin{equation}\label{-9}\dashint_{B(q_i, 2R)} |J\nabla b''_1 - \nabla b''_2|^2 \leq \Phi(\frac{1}{i}|n, R); \dashint_{B(q_i, 2R)} |J\nabla b''_3 - \nabla b''_4|^2 \leq \Phi(\frac{1}{i}|n, R).\end{equation} 
Continuing the process as above, after certain linear transformation, we may assume $\nabla b_j (j = 1,..., k)$ satisfy (\ref{eq4}), (\ref{eq5}) and 
\begin{equation}\label{eq14}\dashint_{B(q_i, R)} |J\nabla b_{2s-1} - \nabla b_{2s}|^2 \leq \Phi(\frac{1}{i}|n, R)\end{equation} for $1\leq s\leq k-n$. By taking $R\to\infty$ and a diagonal subsequence argument, we can define a ``partial" complex structure on the limit space $\mathbb{R}^k$: \begin{equation}\label{eq15}J\nabla b_{2s-1} = \nabla b_{2s}, J\nabla b_{2s} = -\nabla b_{2s-1}\end{equation} for 
$1\leq s\leq k-n$. Therefore we can write $\mathbb{R}^k = \mathbb{C}^{k-n}\times\mathbb{R}^{2n-k}$. Note that there is no ambiguity on the complex structure $J$ in different spaces: just check the space first.

\begin{lemma}\label{lm-1}
The functions $f_s$ are holomorphic on the $\mathbb{C}^{k-n}$ factor of $\mathbb{R}^k$.
\end{lemma}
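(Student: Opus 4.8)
The plan is to push the Cauchy--Riemann equations through the limit. Each $g_i^s$ (a unitary frame of $\mathcal{O}_d(M_i,q_i,g_i)$ as in Lemma \ref{lm1}, now in the situation $(M_i,q_i,g_i)\to(\mathbb{R}^k,0,\nu)$) is holomorphic, so, writing $g_i^s=u_i^s+\sqrt{-1}\,v_i^s$ and extending $\langle\cdot,\cdot\rangle$ complex-bilinearly to complex-valued functions, the K\"ahler Cauchy--Riemann relations $J\nabla u_i^s=\nabla v_i^s$, $J\nabla v_i^s=-\nabla u_i^s$ become $J\nabla g_i^s=-\sqrt{-1}\,\nabla g_i^s$. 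Since $J$ is orthogonal, for any two holomorphic functions $g,h$ we then get $\langle\nabla g,\nabla h\rangle=\langle J\nabla g,J\nabla h\rangle=-\langle\nabla g,\nabla h\rangle$, i.e.\ $\langle\nabla g,\nabla h\rangle\equiv0$. I would apply this with $g=g_i^s$ and $h=B_t^{(i)}:=b_{2t-1}+\sqrt{-1}\,b_{2t}$ for $1\le t\le k-n$, where the $b_j$ are the harmonic functions obtained after the linear transformations of the preceding paragraph, so that (\ref{eq4}), (\ref{eq5}) and (\ref{eq14}) hold on $B(q_i,R)$. The point is that $B_t^{(i)}$ is only ``almost holomorphic'': using $J\nabla g_i^s=-\sqrt{-1}\,\nabla g_i^s$ and the orthogonality of $J$ one computes
\[
\langle\nabla g_i^s,\nabla B_t^{(i)}\rangle=\sqrt{-1}\,\langle\nabla g_i^s,\,\nabla b_{2t}-J\nabla b_{2t-1}\rangle,
\]
so Cauchy--Schwarz together with the Cheng--Yau bound $|\nabla g_i^s|\le C_2(n)R^{d-1}$ on $B(q_i,R)$ from Lemma \ref{lm1} and the estimate (\ref{eq14}) yields
\[
\dashint_{B(q_i,R)}\bigl|\langle\nabla g_i^s,\nabla B_t^{(i)}\rangle\bigr|^2\ \leq\ \Phi(\tfrac{1}{i}|n,R,d)\ \longrightarrow\ 0 .
\]

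Next I would let $i\to\infty$ for each fixed $R$. Along the subsequence already fixed, $g_i^s\to f_s$ and $B_t^{(i)}\to z_t$, where the $b_j$ converge to an orthonormal affine coordinate system $x_1,\dots,x_k$ on $\mathbb{R}^k$ and $z_t:=x_{2t-1}+\sqrt{-1}\,x_{2t}$ is the standard complex coordinate on the $t$-th factor of $\mathbb{C}^{k-n}$ --- recall (\ref{eq15}) identifies $J$ on the corresponding coordinate directions. All functions in sight are uniformly bounded and uniformly Lipschitz harmonic functions, so by the Cheeger--Colding theory on convergence of harmonic functions under measured Gromov--Hausdorff convergence with nonnegative Ricci curvature (see \cite{[CC3]}; also \cite{[Di1]}\cite{[X]}) --- applied to the real and imaginary parts, which are real bilinear pairings of the uniformly controlled real harmonic functions $u_i^s,v_i^s,b_{2t-1},b_{2t}$ --- the pairings $\langle\nabla g_i^s,\nabla B_t^{(i)}\rangle$ converge in $L^2_{\mathrm{loc}}(\mathbb{R}^k,\nu)$ to $\langle\nabla f_s,\nabla z_t\rangle$. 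Comparing with the displayed estimate forces $\langle\nabla f_s,\nabla z_t\rangle=0$ a.e., hence everywhere, since by Lemma \ref{lm1} (and because $\nu$ is a constant multiple of Lebesgue measure) $f_s$ is a genuine harmonic, in particular smooth, function on $\mathbb{R}^k$. Because $\langle\nabla f_s,\nabla z_t\rangle=\partial_{x_{2t-1}}f_s+\sqrt{-1}\,\partial_{x_{2t}}f_s=2\,\partial f_s/\partial\bar z_t$, this says $\partial f_s/\partial\bar z_t\equiv0$ for every $t=1,\dots,k-n$; that is, $f_s$ is holomorphic in $(z_1,\dots,z_{k-n})$, i.e.\ holomorphic on the $\mathbb{C}^{k-n}$ factor of $\mathbb{R}^k$.

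The one delicate step is the passage to the limit of the gradient pairings $\langle\nabla g_i^s,\nabla B_t^{(i)}\rangle$: this is exactly where the (collapsed) Cheeger--Colding machinery set up in Section 2 --- rectifiability of the limit, the Poincar\'e inequality, and the $L^2$ convergence of differentials of uniformly bounded harmonic functions --- is genuinely needed, and one should be mildly careful that the pairing here is complex-bilinear rather than Hermitian (handled by splitting into real and imaginary parts before invoking the scalar convergence statement). Everything else --- the identity $\langle\nabla g,\nabla h\rangle\equiv0$ for holomorphic $g,h$, the ``almost holomorphicity'' of $B_t^{(i)}$ coming from (\ref{eq14}), and the Cauchy--Schwarz bound via the Cheng--Yau gradient estimate of Lemma \ref{lm1} --- is routine.
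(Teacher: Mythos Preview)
Your argument is correct and follows the same strategy as the paper: use (\ref{eq14}) to push the Cauchy--Riemann equations of $g_i^s$ through the Gromov--Hausdorff limit. Your complex-bilinear packaging via $J\nabla g=-\sqrt{-1}\,\nabla g$ and $B_t^{(i)}=b_{2t-1}+\sqrt{-1}\,b_{2t}$ is slicker than the paper's coordinatewise verification with real and imaginary parts $u_i,v_i$, but the content is identical --- your displayed identity $\langle\nabla g_i^s,\nabla B_t^{(i)}\rangle=\sqrt{-1}\,\langle\nabla g_i^s,\nabla b_{2t}-J\nabla b_{2t-1}\rangle$ is exactly the paper's use of $\langle\nabla u_i,\nabla b_1\rangle=\langle\nabla v_i,J\nabla b_1\rangle$ rewritten.

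The one substantive difference is how the limit of the gradient pairings is justified. You invoke $L^2_{\mathrm{loc}}$-convergence of $\langle\nabla g_i^s,\nabla b_j\rangle$ to $\langle\nabla f_s,\nabla x_j\rangle$ as a black box from \cite{[CC3]}\cite{[Di1]}\cite{[X]}; this is defensible, but locating the precise statement for the collapsed case in those references takes some care, and Section~2 of the paper does not actually record it. The paper sidesteps this entirely: it tests $\langle\nabla u_i,\nabla b_1\rangle$ against a smooth function $\lambda(b_1,\dots,b_k)$, integrates by parts using harmonicity of $b_1$ to move the derivative onto $\lambda$, and then the convergence reduces to the \emph{uniform} convergence $u_i\to u$ together with (\ref{eq4}), (\ref{eq5}) --- see (\ref{eq17})--(\ref{eq19}). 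This yields a self-contained argument that avoids any general $W^{1,2}$-convergence theory on limit spaces. Either route works; the paper's is more elementary and hands-on, yours is more conceptual but leans on machinery you would need to pin down.
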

\begin{proof}
 Let $f_s = u+\sqrt{-1}v$, where $u$ and $v$ are real harmonic functions on $\mathbb{R}^k$. By (\ref{eq15}), we just need to verify the Cauchy-Riemann equation for $f_s$ along $\nabla b_1$ and $\nabla  b_2$.  Given any point $x\in\mathbb{R}^k$, consider a smooth function $\lambda = \lambda(b_1, ...., b_k)$ supported in $B(x, 1)$.  
Take $R = |x| + 3$.  We may assume $g_i\in \mathcal{O}_d(M_i, q_i, g_i)$ converges uniformly in any compact set to $f_s$. Let $g_i = u_i+\sqrt{-1}v_i$, where $u_i$ and $v_i$ are real pluriharmonic functions on $M_i$. Moreover, \begin{equation}\label{eq16}|u_i|, |v_i|, |\nabla u_i|, |\nabla v_i| \leq C(R, d, n)\end{equation} in $B(q_i, R)$. By Cauchy-Riemann equation, $\langle\nabla u_i, \nabla b_1\rangle = \langle\nabla v_i, J\nabla b_1\rangle$. Note that for sufficiently large $i$, $\lambda(b_1, ..., b_k)$ is supported in $B(q_i, R)\subset M_i$ (here for $x\in B(q_i, R)$, $\lambda(b_1,..., b_k)$ is defined by $\lambda(b_1(x), ...., b_k(x))$ ). (\ref{eq14}) and (\ref{eq16}) imply
\begin{equation}\label{eq17}
|\dashint_{B(q_i, R)}\lambda(b_1,..., b_k)\langle\nabla v_i, J\nabla b_1\rangle -\dashint_{B(q_i, R)}\lambda(b_1,..., b_k)\langle\nabla v_i, \nabla b_2\rangle| \leq \Phi(\frac{1}{i}|n)\end{equation} Here $R$ and $\lambda$ are already fixed. As $b_i$ are harmonic, (\ref{eq4}), (\ref{eq5}) and (\ref{eq16}) imply
 \begin{equation}\label{eq18}
 \begin{aligned}\dashint_{B(q_i, R)}\lambda(b_1,..., b_k)\langle\nabla u_i, \nabla b_1\rangle& =-\dashint_{B(q_i, R)}u_i\langle\nabla (\lambda(b_1,...., b_k)), \nabla b_1\rangle \\&=-\dashint_{B(q_i, R)}u_i\sum\limits_{j=1}^k\frac{\partial\lambda}{\partial b_j}\langle\nabla b_j, \nabla b_1\rangle\\&\to -\dashint_{B(0, R)}u\frac{\partial \lambda}{\partial b_1}\\&=\dashint_{B(0, R)}\lambda\langle\nabla u, \nabla b_1\rangle.
 \end{aligned}
 \end{equation} Similarly
 \begin{equation}\label{eq19} \dashint_{B(q_i, R)}\lambda(b_1,..., b_k)\langle\nabla v_i, \nabla b_2\rangle \to \dashint_{B(0, R)}\lambda\langle\nabla v, \nabla b_2\rangle.
 \end{equation}
 
 By (\ref{eq17})-(\ref{eq19}) and that $\lambda$ is supported in $B(x, 1)$, we find $$\dashint_{B(x, 1)}\lambda\langle\nabla u, \nabla b_1\rangle = \dashint_{B(x, 1)}\lambda\langle\nabla v, \nabla b_2\rangle.$$ Similarly $$\dashint_{B(x, 1)}\lambda\langle\nabla v, \nabla b_1\rangle = \dashint_{B(x, 1)}-\lambda\langle\nabla u, \nabla b_2\rangle.$$ Since $\lambda$ is arbitrary, $$\langle\nabla u, \nabla b_1\rangle = \langle\nabla v, \nabla b_2\rangle, \langle\nabla v, \nabla b_1\rangle = -\langle\nabla u, \nabla b_2\rangle$$ at $x$. This concludes the proof of the lemma.
  \end{proof}
  
Let $H(d, k)$ be the space of complex harmonic functions in $\mathbb{R}^k$ with polynomial growth rate $d$. We identify $\mathbb{R}^k = \mathbb{C}^{k-n}\times\mathbb{R}^{2n-k}$.
 Let $E(d, n, k)$ be the subspace of $H(d, k)$ so that the restriction to the $\mathbb{C}^{k-n}$ factor is holomorphic. Then $$E(d, n, k)\subset span\{fg\}$$ where $f\in \mathcal{O}_d(\mathbb{C}^{k-n})$ and $g\in \mathcal{H}_d(\mathbb{R}^{2n-k})$. Therefore \begin{equation}\begin{aligned}
 dim(E(d, k, n))&\leq dim(\mathcal{O}_d(\mathbb{C}^{k-n}))dim(\mathcal{H}_d(\mathbb{R}^{2n-k}))\\&\leq C(n, k)d^{k-n}d^{2n-k-1} \\&= C(n, k)d^{n-1}.\end{aligned}\end{equation}
 Recall $f_s(s= 1,...., h_d)$ are linearly independent. Moreover $f_s\in E(d, n, k)$ by lemma \ref{lm-1}. Therefore $$h_d\leq C(n, k)d^{n-1}.$$ This contradicts (\ref{eq3}).
The proof of theorem \ref{thm1} is complete.

\section{\bf{Proof of the corollaries}}

\emph{Proof of corollary \ref{cor1}:}
This directly follows from the proof of theorem \ref{thm1}. Note that in theorem \ref{thm1}, the condition $\tilde{M}$ does not split is only used to show lemma \ref{lm2}. Note that throughout the proof of corollary \ref{cor1}, we only assume the Ricci curvature and the holomorphic sectional curvature are nonnegative. This is slightly weaker than the nonnegativity of the bisectional curvature.

\medskip

\emph{Proof of corollary \ref{cor2}:}
By the assumption of corollary \ref{cor2} and theorem $1.2$ in \cite{[NT2]}, we can solve the Poincare-Lelong equation $\sqrt{-1}\partial\overline\partial u = Ric$ where $Ric$ is the Ricci form of $M$. Theorem $1.2$ in \cite{[NT2]} also implies $u$ is of logarithmic growth. As the Ricci curvature is positive at one point $p\in M$, $u$ is plurisubharmonic and strictly plurisubharmonic at one point. Now we can replace the function $g(x)$ by $u(x)$ in lemma \ref{lm2} to deduce that $dim(\mathcal{O}_d(M))\geq cd^n$ for all sufficiently large $d$.

\section{\bf{Sharp dimension estimates revisited}}
In this section, we discuss Ni's sharp dimension estimates (theorem \ref{thm-9}) from the point of view of theorem \ref{thm1}. We will not include the rigidity part here.  Under the assumption of theorem \ref{thm-9} (without the maximal volume growth), let $(M_i, g_i, p_i) = (M, \frac{1}{r_i}g, p)$ where $r_i$ is a positive sequence converging to $0$. Then it is easy to see $(M_i, g_i, p_i)$ converges to $\mathbb{C}^n$. If theorem \ref{thm-9} is not true for some $d$, there is a contradiction with lemma \ref{lm1}.

\end{document}